\documentclass[12pt]{article}

\usepackage{amssymb,amsmath,amsfonts,eurosym,geometry,graphicx,color,setspace,sectsty,comment,footmisc,pdflscape,array,bbm}
\usepackage[normalem]{ulem}
\usepackage{caption}
\usepackage{subcaption}
\usepackage{hyperref}
\usepackage{tikz}
\usepackage[numbers]{natbib}
\usepackage{enumitem}

\newtheorem{theorem}{Theorem}
\newtheorem{assumption}{Assumption}

\newtheorem{corollary}[theorem]{Corollary}
\newtheorem{lemma}[theorem]{Lemma}
\newtheorem{proposition}{Proposition}
\newenvironment{proof}[1][Proof]{\noindent\textbf{#1.} }{\ \rule{0.5em}{0.5em}}

\numberwithin{equation}{section}

\newcolumntype{L}[1]{>{\raggedright\let\newline\\arraybackslash\hspace{0pt}}m{#1}}
\newcolumntype{C}[1]{>{\centering\let\newline\\arraybackslash\hspace{0pt}}m{#1}}
\newcolumntype{R}[1]{>{\raggedleft\let\newline\\arraybackslash\hspace{0pt}}m{#1}}

\begin{document}

\begin{titlepage}
\title{Positive probability of explosion for stochastic heat equation with superlinear accretive reaction term and polynomially growing multiplicative noise}

\author{
Michael Salins\thanks{Email: \href{mailto:msalins@bu.edu}{msalins@bu.edu}} \\ 
    Department of Mathematics and Statistics, Boston University
    \and
    Yuyang Zhang\thanks{Email: \href{mailto:yyz@bu.edu}{yyz@bu.edu}} \\ 
    Questrom School of Business, Boston University 
}

\date{\today}
\maketitle
\begin{abstract}
This paper studies the finite time explosion of the stochastic heat equation $\frac{\partial u}{\partial t}(t,x)=\frac{\partial^2}{\partial x^2} u(t,x)+(u(t,x))^{\beta}+\sigma(u(t,x))\dot{W}(t,x)$. We consider an interval $D=[-\pi,\pi]$ under periodic boundary condition where $\dot{W}(t,x)$ is a space-time white noise and $\sigma(u)\approx u^{\gamma}$ near $\infty$. Our results refine existing results by identifying behavior in a previously less understood regime, where we show that if $\beta\in(1,3),\gamma\in(\frac{\beta}{2},\frac{\beta+3}{4})$ or $\beta>1,\gamma\in(0,\frac{\beta}{2}]$ then mild solutions can explode with positive probability. This paper provides a partial characterization of the explosion behavior in an intermediate parameter regime, 
and contribute to the understanding of the interplay between the drift and diffusion terms.
\end{abstract}
\setcounter{page}{0}
\thispagestyle{empty}
\end{titlepage}
\pagebreak \newpage
\section{Introduction}
We explore whether solutions to a semilinear stochastic heat equation explode in finite time. The equation is
\begin{align}
    \begin{cases}
        \frac{\partial u}{\partial t}(t,x)=\frac{\partial^2}{\partial x^2} u(t,x)+b(u(t,x))+\sigma(u(t,x))\dot{W}(t,x)  &x\in D,t>0,\\
        u(t,-\pi)=u(t,\pi) & t>0,\\
        u(0,x)=u_0(x)\text{ bounded and periodic.}
    \end{cases}\label{eq:SPDE}
\end{align}
The spatial domain $D:=[-\pi,\pi]\subset\mathbb{R}$ with periodic boundary imposed. $\dot{W}(t,x)$ is a space-time white noise. We will study  functions $b$ and $\sigma$ of the form
\begin{align}
    b(u)=\begin{cases}
        u\wedge u^{\beta}, & \text{ if } 0\leq \beta<1\\
        u^{\beta}, & \text { if } \beta>1
    \end{cases} \quad\text{and}\quad \sigma(u)=\begin{cases}
        u\wedge u^{\gamma}, & \text{ if } 0\leq \gamma<1\\
        u^{\gamma}, &\text{ if } \gamma\geq 1
    \end{cases}.\label{sec1:eq:b and sigma}
\end{align}
When  $\gamma<1$ or $\beta <1$, we take the minimum $u\wedge u^\beta$ or $u \wedge u^\gamma$  to ensure local Lipschitz continuity. This modification preserves the asymptotic behavior at infinity, as $\sigma(u)$ exhibits the same growth rates as the corresponding power functions for large $u$. Throughout this paper, we focus on positive solutions. Assuming the condition $u_0\geq 0$, this ensures that the solution remains positive due to the comparison principle \cite{Kotelenez1992,Mueller199101}.

If there doesn't exist a noise term then the semilinear stochastic heat equation reduces to a semilinear partial differential equation. Any nontrivial nonnegative solution of this PDE will blow up in finite time if $\beta>1$ follows from a classical Kaplan-type argument \cite{Kaplan}. Intuitively, if a noise term is added to the PDE as in \eqref{eq:SPDE}, there still exists a region of reaction-induced explosion where $\gamma$ is small enough so that the reaction term $(u(t,x))^{\beta}$ is dominant. We will prove that in the case that $\beta\in(1,3),\gamma\in(\frac{\beta}{2},\frac{\beta+3}{4})$ or $\beta>1,\gamma\in(0,\frac{\beta}{2}]$ solutions can explode with positive probability.  
Combining with the results in the literature, we have the following conclusion for the parameters in different regions, which are drawn in Figure~\ref{fig:1}, below:

\begin{center}
    \includegraphics[width=\textwidth]{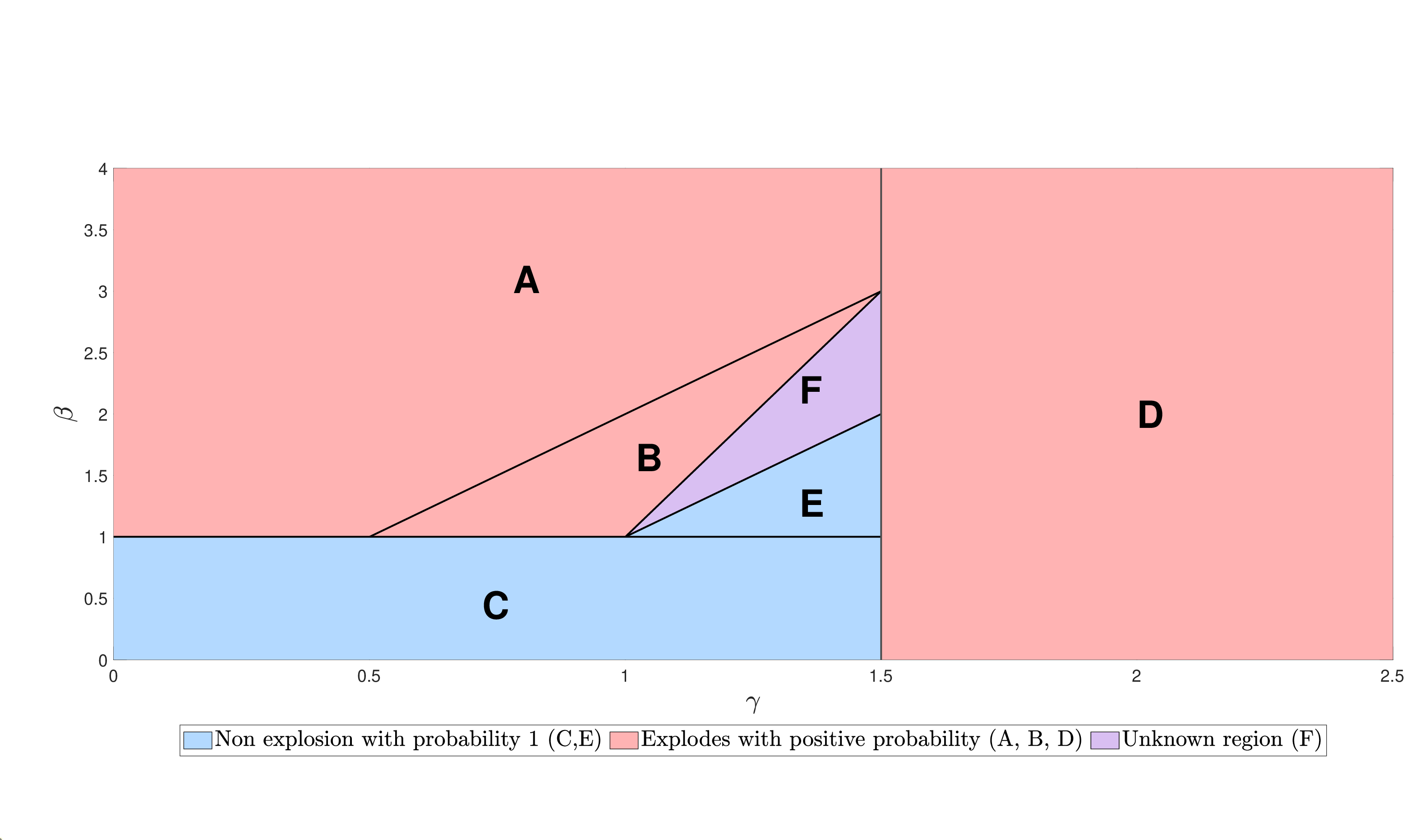}
    \captionof{figure}{Explosion regions}
    \label{fig:1}
\end{center}

\begin{itemize}[labelwidth=2em, labelsep=0.5em, align=left]
    \setlength{\itemsep}{0pt}
    \setlength{\parskip}{0pt}
    \setlength{\parsep}{0pt}
    \item [(A)]If ${\beta>1}$ and $\gamma\in (0,\frac{\beta}{2}]$, we prove that solutions can explode with positive probability in Section \ref{S:holder-case}.
    \item [(B)]If $\beta\in(1,3)$ and $\gamma\in(\frac{\beta}{2},\frac{\beta+3}{4})$, we prove that solutions can explode with positive probability in Section \ref{S:non-holder-case}.
    \item [(C)] If $\beta\in [0,1]$ and $\gamma\in [0, \frac{3}{2}]$ then the solutions cannot explode by \cite{Mueller1991,salins2025} and comparison principle \cite{Kotelenez1992,Mueller199101}.
    \item [(D)]If ${\beta \geq 0}$ and $\gamma > \frac{3}{2}$, then results of Mueller \cite{Mueller2000}, together with a comparison principle \cite{Kotelenez1992,Mueller199101}, imply that the solutions explode with positive probability.
    \item [(E)]If $\beta\in (1,2)$ and $\gamma\in (\frac{1+\beta}{2},\frac{3}{2}]$, then the solutions cannot explode by \cite{Mickey202409}.
    \item [(F)]If ${\beta  \in (1,3)}$ and $\gamma\in [\frac{\beta+3}{4},\frac{1+\beta}{2}{\wedge \frac{3}{2}}]$ then the problem of explosion remains open but we conjecture the solutions cannot explode.
\end{itemize}
Also, explosion on some boundaries is known, for example, if $\beta >1$ and  $\gamma=0$ then solutions explode almost surely \cite{Bonder2009,Foondun2021} and if $\beta=1$ and $\gamma=\frac{1+\beta}{2}$ then for $b(u)=Au^{\beta}$ solutions cannot explode provided that $A$ is sufficiently small \cite{Mickey202409}.

In a series of papers \cite{Mueller1991,Mueller1998,Mueller2000,Mueller1993}, 
Mueller and Sowers identified a critical growth rate for $\sigma(u)$ in the absence of a drift term ($b(u)\equiv 0$), under periodic boundary conditions, driven by space-time white noise $\dot{W}$ in one spatial dimension. Specifically, if $|\sigma(u)| \leq C(1+|u|^{\gamma})$ for some $\gamma < \frac{3}{2}$, then solutions do not exhibit finite-time blow-up almost surely. Conversely, if $\sigma(u) = c|u|^{\gamma}$ for some $c>0$ and $\gamma > \frac{3}{2}$, then the solutions blow up with positive probability. In the critical case $\gamma = \frac{3}{2}$, solutions remain non-explosive \cite{salins2025}.

A substantial body of work has extended these results to a range of settings, 
including higher-dimensional domains with Dirichlet boundary conditions under colored noise 
\cite{Franzova1999,Yuyang2025}, 
reaction--diffusion equations on unbounded domains \cite{Krylov1996}, 
fractional heat equations \cite{Bezdek2018,Foondun2019}, 
as well as nonlinear Schr\"odinger and stochastic wave equations 
\cite{Debuss2002,Mueller1997}. 
More recently, attention has focused on the role of superlinear drift terms $b(u(t,x))$ 
in influencing finite-time explosion for equations of the form \eqref{eq:SPDE}.

Building on earlier work of Bonder and Groisman \cite{Bonder2009}, 
Foondun and Nualart \cite{Foondun2021} established that when $\sigma$ is uniformly bounded above and below, 
that is, $0 < c \leq \sigma(u) \leq C < \infty$, and $b$ is positive and increasing, 
the explosion behavior is completely determined by Osgood condition. 
Specifically, almost sure blow-up occurs if
\begin{align}
    \int_1^\infty \frac{1}{b(u)}\,du < \infty, \label{Osgood explosive condition}
\end{align}
whereas global existence holds almost surely when
\begin{align}
    \int_1^\infty \frac{1}{b(u)}\,du = \infty.
\end{align}
This criterion coincides with Osgood’s classical condition for blow-up in the ordinary differential equation $\frac{dv}{dt} = b(v(t))$ \cite{Osgood1898}. 
Moreover, on unbounded domains, the condition \eqref{Osgood explosive condition} leads to instantaneous blow-up everywhere \cite{Foondun2024}.

Despite these advances, the combined effect of superlinear drift and superlinear diffusion remains poorly understood. 
An initial step in this direction was taken by Dalang, Khoshnevisan, and Zhang \cite{Dalang2019}, 
who proved global existence under the conditions $b(u) \leq C(1+|u|\log|u|)$ and 
$\sigma(u) \in o(|u|(\log|u|)^{1/4})$. 
Subsequent work has extended these results to broader classes of drift terms satisfying generalized Osgood conditions, 
as well as to more general domains, noise structures, and related equations 
\cite{AGRESTI2023,chen.huang:23:superlinear,Liang2022,Annie2021,salins2024,salins2025}.

More recently, Salins \cite{Mickey202409} showed that Mueller’s critical growth rate 
$\sigma(u) \approx |u|^{3/2}$ remains sharp for ensuring global solutions, 
even when the drift is accretive and superlinear. 
In this regime, strong stochastic fluctuations can counterbalance the growth induced by $b$, 
keeping the $L^1$ norm finite and effectively reducing the dynamics to a setting similar to that studied by Mueller 
\cite{Mueller1991,Mueller1998,Mueller2000,Mueller1993}. 
In particular, for $b(u)=A u^\beta$ and $\sigma(u)=u^\gamma$, 
non-explosion holds when $\beta \in (1,2)$ and $\gamma \in \bigl(\frac{1+\beta}{2}, \frac{3}{2}\bigr]$, 
and also in the boundary case $\gamma=\frac{1+\beta}{2}$ provided that $A$ is sufficiently small.

Joseph and Ovhal \cite{joseph2026} proved explosion with positive probability if there exists $\eta>0$ such that $\frac{b(x)}{x}\leq (\frac{\sigma(x)}{x})^{\frac{1}{4}-\eta}$ for $x\geq 1$ and $b(u), \sigma(u) \leq u(\log(u))^\theta$ for some $\theta>0$. Even though their assumptions do not allow $b(u) = u^\beta$ and $\sigma(u) = u^\gamma$ for $\beta,\gamma>1$, we can show that their condition $\frac{b(x)}{x}\leq (\frac{\sigma(x)}{x})^{\frac{1}{4}-\eta}$ is still valid in our setting. When $b(x) = x^\beta$ and $\sigma(x) = x^\gamma$ this condition is equivalent to $(\gamma -1) < \frac{(\beta -1)}{4}$, or equivalently $\gamma < \frac{\beta + 3}{4}$.

Inspired by \cite{joseph2026}, the results of this paper prove that if $\beta\in(1,3),\gamma\in(\frac{\beta}{2},\frac{\beta+3}{4})$ or $\beta>1,\gamma\in(0,\frac{\beta}{2}]$ then there exists an initial condition $u_0\in C(\overline{D})$ such that solutions of \eqref{eq:SPDE} explode with positive probability under Assumption \ref{assumption1} and \ref{assumption2}. Furthermore, we can show that there is a positive probability of explosion from any initial data satisfying Assumption \ref{assumption2} and $u(0,x)\not\equiv 0$ a.e. by using the support theorem of \cite{Bally1995}. After combining the results of this paper with \cite{Mickey202409}, there still remains a regime where the question of finite-time explosion is unsolved: $\beta \in (1,3),\gamma\in [\frac{\beta+3}{4} ,\frac{\beta+1}{2} \wedge \frac{3}{2})$.  We conjecture that in this regime, solutions cannot explode, but we leave the proof of non-explosion for future work. Our intuition is that in this regime the noisy term is dominant and we expect the behavior to match the non-explosive behavior of \cite{Mueller1998,Mueller1993,salins2025}.
 
We prove the results with the help of $L^1$ norm of the local mild solution. Because of the periodic boundary condition the $L^1$ norm $I(t):=\int u(t,x)dx$ of the local mild solution is a local submartingale. Because when $\beta>1$ we have $b(u)=u^{\beta}$ and by using the stopping time $\tau_n^\infty$ as defined in \eqref{def:tau_n} it follows that
\begin{align}
    I_n(t)&:= I(t \wedge \tau^\infty_n)\nonumber\\
    &=I_n(0)+\int_0^{t\land\tau_n^\infty}\int (u(t,x))^{\beta}dx+\int_0^{t\land\tau_n^\infty}\int\sigma(u(t,x))W(dxds).  
\end{align}

In this case, we can directly apply It\^{o} formula to $V(I_n(t)):=1-(I_n(t))^{-\epsilon}$ for any $\epsilon>0$ then we get
\begin{align}
    V(I_n(t))=&V(I_n(0))\nonumber\\
    &+\int_0^{t\land\tau_n^\infty}\int \left( \frac{\epsilon (u(t,x))^{\beta} }{(I_n(s))^{\epsilon+1}}-\frac{\epsilon(\epsilon+1)(\sigma(u(t,x)))^2}{2(I_n(s))^{\epsilon+2}}\right)dxds+N(t\land \tau_n^\infty),
\end{align}
where $N(t\land\tau_n^\infty)$ is a local martingale after introducing suitable initial condition and stopping times. This It\^{o} formula depends on  both the spatial $L^{\beta}$ and $L^{2\gamma}$ norms of the local mild solution $u(t,x)$ by \eqref{sec1:eq:b and sigma}.

In the case $\beta>1$ and $2\gamma\leq \beta$, by H\"older inequality,
\begin{align} 
   \int b(u(t,x)) dx &=\int (u(t,x))^\beta dx \nonumber\\
   &\geq C \left(\int (u(t,x))^{2\gamma}dx\right)^{\frac{\beta}{2\gamma}}\nonumber\\
   &\geq C \left(\int (\sigma(u(t,x)))^2dx\right)^{\frac{\beta}{2\gamma}}.\label{eq:holder}
\end{align}
Then for suitable initial condition and stopping times, it follows that $V(I_n(t))$ is a bounded local submartingale. Using the Khasminskii's method as in stochastic ordinary differential equations (see \cite[Theorems 3.5 and 3.6]{Khasminskii}) we can prove $L^1$ norm $I(t)$ or $L^\infty$ norm of $u(t,x)$ explodes with positive probability in finite time. Furthermore, because $I(t)\leq C|u|_{L^\infty}$ we can prove that solutions explodes with positive probability. We prove positive probability of explosion in this setting in Section \ref{S:holder-case}, below.

In the case $\beta\in (1,3)$ and $\gamma\in(\frac{\beta}{2},\frac{\beta+3}{4})$, the H\"older inequality points in the reverse direction of \eqref{eq:holder}, and cannot help us prove explosion. Instead of using H\"older's inequality, we prove that there is a positive probability that the $L^{2\gamma}$ norm stays sufficiently small compared to the $L^\beta$ norms so that $V$ remains a bounded supermartingale. This requires us to develop moment bounds on the Lebesgue and stochastic convolution integrals in the integral equation of the local mild solution $u(t,x)$
\begin{align}
	u(t,x) = &\int G(t,x-y) u(0,y)dy + \int_0^t \int G(t-s,x-y)(u(s,y))^\beta dyds \nonumber\\
	&+ \int_0^t \int G(t-s,x-y)\sigma(u(s,y)) W(dyds). \label{eq:integral equation intro}
\end{align}
Using these bounds with suitable initial condition and stopping time, we can prove that with positive probability the $L^1$ norm and $L^\infty$ norm stay relatively close and the $L^1$ norm reaches $\infty$ in a finite amount of time. Thus, we can prove that solutions explodes with positive probability. We prove non-explosion in this setting in Section \ref{S:non-holder-case}, below.

In Section \ref{S:setup} we will present assumptions, definitions and the main result for this paper. In Section \ref{S:Ito of L1} we will show the It\^{o} formula of $L^1$ norm of the local mild solution $u(t,x)$ and the corresponding Lebesgue integral and quadratic variation bound. In Section \ref{S:holder-case} we will prove positive probability explosion in the case $\beta> 1$ and $\gamma\in(0,\frac{\beta}{2}]$ where H\"older inequality \eqref{eq:holder} is helpful. Finally in Section \ref{S:non-holder-case} we will prove positive probability explosion in the case $\beta\in (1,3)$ and $\gamma\in (\frac{\beta}{2},\frac{\beta+3}{4})$ by developing and utilizing moment bounds for Lebesgue and stochastic convolution integrals.

\section{Assumptions, definitions and results}\label{S:setup}
Let $D:=[-\pi,\pi]$ with periodic boundary by the spatial domain. Define the periodic heat kernel for $x\in [-\pi,\pi]$ by
\begin{align}
    G(t,x)=\frac{1}{2\pi}+\frac{1}{\pi}\sum_{k=1}^\infty e^{-|k|^2t}\cos(kx).\label{eq:heat kernel}
\end{align}
\begin{assumption}\label{assumption1}
In \eqref{eq:SPDE}, the functions $b:\mathbb{R}\mapsto\mathbb{R}$ and $\sigma:\mathbb{R}\mapsto\mathbb{R}$ are specified as
\begin{align}
    b(u)=u^{\beta}\quad\text{and}\quad \sigma(u)=\begin{cases}
        u^{\gamma},\gamma\geq 1\\
        u\wedge u^{\gamma},\gamma<1
    \end{cases}, \label{sec2:eq:b and sigma}
\end{align}
we assume $\beta$ and $\gamma$ are real numbers and satisfy
\begin{align}
    \beta\in (1,3),\gamma\in \left(\frac{\beta}{2},\frac{\beta+3}{4}\right)\text{ or } \beta>1,\gamma\in (0,\frac{\beta}{2}].\label{def:b and sigma}
\end{align}
\end{assumption}
For any $n\in\mathbb{N}$, define the cutoff versions of $b$ and $\sigma$ by
\begin{align}
    b_n(u)=\begin{cases}
        b(-n),\text{ if }u<-n\\
        b(u),\text{ if }u\in(-n,n)\\
        b(n),\text{ if }u>n.
    \end{cases}\label{eq:cutoff of b}\\
    \sigma_n(u)=\begin{cases}
        \sigma(-n),\text{ if }u<-n\\
        \sigma(u),\text{ if }u\in(-n,n)\\
        \sigma(n),\text{ if }u>n.
    \end{cases}\label{eq:cutoff of sigma}
\end{align}
Because $b$ and $\sigma$ satisfy \eqref{sec2:eq:b and sigma} and \eqref{def:b and sigma}, the cutoff functions $b_n$ and $\sigma_n$ are globally Lipschitz continuous. The mild solution to the cutoff SPDE
\begin{align}
     \frac{\partial u_n}{\partial t}(t,x)=\frac{\partial^2}{\partial x^2} u_n(t,x)+b_n(u_n(t,x))+\sigma_n(u_n(t,x))\dot{W}(t,x)\label{eq:SPDE of cutoff}
\end{align}
is defined to be the solution to the integral equation
\begin{align}
    \begin{split}
        u_n(t,x)=&\int G(t,x-y)u(0,y)dy+\int_0^t\int G(t-s,x-y)b_n(u_n(s,y))dyds\\
        &+\int_0^t\int G(t-s,x-y)\sigma_n(u_n(s,y))W(dyds).
    \end{split}\label{eq:integral equation of cutoff}
\end{align}
Because $b_n$ and $\sigma_n$ are globally Lipschitz continuous, classical results prove that for each $n$, there exists a unique mild solution $u_n$ \cite{DaPrato_Zabczyk_2014,Dalang1999}. Furthermore, these solutions are all \textit{consistent} in the sense that for any $n<m$
\begin{align}
    u_n(t,x)=u_m(t,x)\text{ for all }x\in D\text{ and }t\in[0,\tau_n^\infty]\label{eq:consistency}
\end{align}
where
\begin{align}
    \tau_n^\infty:=\inf\Big\{t>0:\sup_{x\in D}|u_n(t,x)|>n\Big\}.\label{def:tau_n}
\end{align}
The consistency is a consequence of the uniqueness of each $u_n(t,x)$ and the fact that $b_n(u)=b_m(u)$ and $\sigma_n(u)=\sigma_m(u)$ for all $|u|\leq n$.

Define the \textit{explosion time} by
\begin{align}
    \tau_\infty^\infty=\sup_n\tau_n^\infty.\label{def:tau_infty}
\end{align}
We can uniquely define a \textit{local mild solution }by
\begin{align}
    u(t,x):=u_n(t,x)\text{ for all }x\in D,t\in[0,\tau_n^\infty].\label{def:local mild solution}
\end{align}
If $\tau_\infty^\infty<+\infty$, then we say that $u(t,x)$ explodes in finite time. The local mild solution is called a global mild solution if
\begin{align}
    \mathbb{P}(\tau_\infty^\infty=\infty)=1.
\end{align}
We further assume that the initial condition $u_0\geq 0$ as shown in Assumption \ref{assumption2}.
\begin{assumption}\label{assumption2}
    \begin{align}
        u(0,x)\geq 0 \text{ for a.e. } x\in D. \label{assum: positivity}
    \end{align}
\end{assumption}
Assumption \ref{assumption2} combined with $\sigma(0)=0$ guarantees that the local mild solution remains positive due to the comparison principle \cite{Kotelenez1992,Mueller199101}.
\begin{theorem}\label{thm:main result}
    Assume Assumption \ref{assumption1} and \ref{assumption2}, there exists initial condition $u_0\in C(\overline{D})$ such that if $u(0,x)=u_0(x)$ then $u(t,x)$ explodes in finite time with strictly positive probability, i.e. $\mathbb{P}(\tau_\infty^\infty<\infty)>0$.
\end{theorem}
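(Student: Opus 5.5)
We work with the $L^1$ norm $I(t)=\int u(t,x)\,dx$ and the Lyapunov function $V(I)=1-I^{-\epsilon}$, treating the two parameter regimes of Assumption \ref{assumption1} separately. Because the boundary is periodic, $\int G(t,x-y)\,dx=1$. Integrating the mild equation in $x$ up to $\tau_n^\infty$ therefore gives
\[
I_n(t)=I_n(0)+\int_0^{t\wedge\tau_n^\infty}\!\!\int u^\beta\,dx\,ds+M(t\wedge\tau_n^\infty),
\]
where $M$ is a martingale with $d\langle M\rangle_s=\int\sigma(u(s,x))^2dx\,ds$. By the It\^o formula,
\[
dV(I)=\epsilon I^{-\epsilon-1}\Big(\int u^\beta dx-\tfrac{\epsilon+1}{2}I^{-1}\int\sigma(u)^2dx\Big)ds+dN .
\]
In both regimes the aim is to show that, on an event of positive probability, the drift dominates, so that the drift of $V$ is at least $c\,I^{\beta-1-\epsilon}$. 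A Khasminskii-type argument (cf.\ \cite[Theorems 3.5, 3.6]{Khasminskii}) then forces $I$ to reach $\infty$ in finite time. Since $I\le 2\pi\sup_x u$, this gives $\tau_\infty^\infty<\infty$.

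\textbf{Case $\gamma\le\beta/2$.} Hölder on $D$ together with $\sigma(u)\le u^\gamma$ gives $\int\sigma(u)^2\le C(\int u^\beta)^{2\gamma/\beta}$ and $\int u^\beta\ge cI^\beta$. Writing $A=\int u^\beta$, the drift bracket is at least
\[
A-CI^{-1}A^{2\gamma/\beta}=A\bigl(1-CI^{-1}A^{2\gamma/\beta-1}\bigr),\qquad A^{2\gamma/\beta-1}\le cI^{2\gamma-\beta}.
\]
For $\gamma<\beta/2$ this is nonnegative as soon as $I\ge K$ for a large $K$. When $\gamma=\beta/2$, choose $\epsilon$ small and $K$ large instead. (For $\gamma<1$ one uses $\sigma(u)\le u\wedge u^\gamma$, so small values of $u$ are harmless.)

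Take $u_0\equiv K_0$ with $K_0\gg K$ and let $\rho=\inf\{t:I(t)\le K\}$. Then $V(I_n(t\wedge\rho))$ is a bounded submartingale. Optional stopping gives $P(\rho<\infty)\le K^{\epsilon}K_0^{-\epsilon}<1$. On the event $\{\rho=\infty\}$, the inequality $\int u^\beta\ge cI^\beta$ and the submartingale decomposition imply that the drift accumulates at a rate incompatible with boundedness of $V$ unless $I$ explodes. Concretely, the compensator $\int_0^{t}\epsilon I^{\beta-1-\epsilon}ds$ is bounded by $2$ in expectation. If $\tau_\infty^\infty=\infty$ on $\{\rho=\infty\}$, this integral would grow at least linearly in $t$ since $I\ge K$, which is a contradiction. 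Hence $P(\tau^\infty_\infty<\infty)\ge 1-(K/K_0)^\epsilon>0$.

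\textbf{Case $\beta\in(1,3)$, $\gamma\in(\beta/2,(\beta+3)/4)$.} Hölder now goes the wrong way. We therefore add a second stopping time $\theta=\inf\{t:\sup_x u(t,x)>\lambda I(t)\}$ for a constant $\lambda>1/(2\pi)$. Before $\theta$ we have $\int u^{2\gamma}\le C\lambda^{2\gamma-1}I^{2\gamma}$ and $\int u^\beta\ge cI^\beta$. The drift bracket is then at least $cI^\beta-C\lambda^{2\gamma-1}I^{2\gamma-1}$, which is positive for large $I$ because $2\gamma-1<\beta$ (indeed $2\gamma-1<(\beta+1)/2<\beta$). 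The argument of the first case then goes through on $\{\theta=\infty\}$.

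The main obstacle is showing $P(\theta\ge\tau_\infty^\infty)>0$, i.e.\ that $u$ stays flat relative to its mean all the way to blowup. For this, start from the flat datum $u_0\equiv K_0$. Write $u-\bar u$, with $\bar u=I/2\pi$, as the sum of the fluctuation of the deterministic convolution $\int_0^t\!\int G(t-s,x-y)(u^\beta-\overline{u^\beta})\,dy\,ds$ and the stochastic convolution. The plan is to run a dyadic scheme along the levels $I\in[2^k,2^{k+1}]$. The time spent at level $k$ is of order $2^{-k(\beta-1)}$, the reaction time scale. The stochastic convolution then has sup-norm of order $2^{k\gamma}(2^{-k(\beta-1)})^{1/4}$, using moment bounds and Kolmogorov/factorization estimates with the spatial white-noise scaling $t^{1/4}$. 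Relative to $2^k$ this is $2^{k(\gamma-1-(\beta-1)/4)}$, which is summably small exactly when $\gamma<(\beta+3)/4$. The deterministic fluctuation is controlled by a Gronwall argument using heat-kernel smoothing, with $\beta<3$ entering so that the variance of the reaction term does not outpace diffusion on that time scale. A Borel--Cantelli/union bound over $k$, with Chebyshev applied to high moments, then gives positive probability that the relative deviation stays below $\lambda-1/(2\pi)$ at every level. I expect these convolution moment bounds, made uniform across levels with the stopping times inserted, to be the hardest step.
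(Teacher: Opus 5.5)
Your argument for $\gamma\le\beta/2$ is correct and is essentially the paper's: H\"older, the Lyapunov function $V=1-I^{-\epsilon}$, optional stopping, and the bound on $\mathbb{E}[t\wedge\rho\wedge\tau_\infty^\infty]$ coming from the positive drift.

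The gap is in the case $\gamma\in(\frac{\beta}{2},\frac{\beta+3}{4})$, at exactly the step you flag. With a \emph{fixed} $\lambda$, a union bound over dyadic levels cannot show that $\sup_x u\le\lambda I$ persists up to $\tau_\infty^\infty$ with positive probability, and one expects this event to have probability zero. The reason is that the flat (ODE) blowup is linearly unstable.
\begin{itemize}
\item Write $u=\bar u(1+r)$ and linearize. The $m$-th Fourier mode of $r$ satisfies $\dot r_m\approx\bigl((\beta-1)\bar u^{\beta-1}-m^2\bigr)r_m$, while $\dot{\bar u}\approx\bar u^{\beta}$. Hence $r_m$ grows like $\bar u^{\beta-1}$, and each dyadic level multiplies an existing relative deviation by roughly $2^{\beta-1}>1$.
\item Diffusion damps mode $m$ only at the fixed rate $m^2$, while the reaction acts at rate $\beta\bar u^{\beta-1}\to\infty$. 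On the time scale $2^{-k(\beta-1)}$, heat-kernel smoothing therefore does nothing for the low modes. Gronwall gives only a constant factor larger than one per level, and these factors compound geometrically.
\item Summing the fresh stochastic contributions $2^{k(\gamma-1-(\beta-1)/4)}$ misses this amplification. A low-mode deviation of relative size $\delta_{k_0}>0$ created at level $k_0$ has grown to about $2^{(k-k_0)(\beta-1)}\delta_{k_0}$ by level $k$. It is of order one after finitely many levels.
\item The noise makes the early deviation nonzero almost surely, and its later contributions are relatively negligible. So the solution should localize like the deterministic single-point blowup. For that profile, $\sup_x u\approx I^{2/(3-\beta)}$ up to logarithms when $\beta<3$. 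Thus $\sup_x u/I\to\infty$, and your stopping time $\theta$ fires before $\tau_\infty^\infty$.
\end{itemize}

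The missing idea is to give up flatness and cap $|u(t)|_{L^\infty}$ only by a power $I(t)^{\theta}$ with $\theta>1$. This still controls the It\^o correction: $\int\sigma(u)^2dx\le|u|_{L^\infty}^{2\gamma-\beta}\int u^\beta dx$, and $|u|_{L^\infty}^{2\gamma-\beta}\ll I$ provided $\theta(2\gamma-\beta)<1$. The paper implements this as follows.
\begin{itemize}
\item It works on levels $I\approx3^n$ with a stopping time $\tau_n$. This triggers when $I>3^{n+1}$, when $I\le3^{\eta n}$ with $(2\gamma-\beta)\theta<\eta<1$, or when $|u|_{L^\infty}>3^{(n+1)\theta}$.
\item It bounds $\sup_x u$ via the decomposition $u=S+Y+Z$. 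The two convolutions are handled by the maximal $L^p$ estimates of Theorems~\ref{thm:stoch} and~\ref{thm:Leb}, with $p\downarrow6$ and $p\downarrow\frac32$ respectively.
\item These estimates are fed by the a priori bounds $\mathbb{E}\int\!\int u^\beta\le3^{n+1}$ and $\mathbb{E}\int\!\int\sigma(u)^2\le3^{2(n+1)}$ from the $L^1$ semimartingale, interpolated against the $L^\infty$ cap.
\item The resulting Chebyshev bounds decay geometrically in $n$ provided $\theta>\frac{1}{3-2\gamma}$ and $\theta>\frac{2}{3-\beta}$. These are compatible with $\theta<\frac{1}{2\gamma-\beta}$ exactly when $\gamma<\frac{\beta+3}{4}$. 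Note that $\frac{2}{3-\beta}$ is precisely the localized-profile exponent above.
\item The levels are chained by the strong Markov property. The per-level success probabilities $1-C3^{-\psi n}$ have a positive product, and the level durations $3^{-nx}$ are summable.
\end{itemize}
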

Using the support theorem \cite[Theorem 2.1]{Bally1995} we can show that there is a positive probability of explosion from any initial data satisfying Assumption \ref{assumption2} and $u(0,x)\not\equiv 0$ a.e., not just large initial data.

Throughout the paper, the constant $C>0$ can refer to any positive constant and its value can change from line to line. An integral symbol without explicit bounds refers to integration over the spatial domain
\begin{align*}
    \int v(x)dx:=\int_Dv(x)dx=\int_{-\pi}^\pi v(x)dx.
\end{align*}
We use $L^p$ to refer to the standard $L^p$ spaces over the spatial domain with the norms
\begin{align}
    |v|_{L^p}:=\Big(\int|v(x)|^pdx\Big)^{\frac{1}{p}}\text{ and }|v|_{L^\infty}:=\sup_{x\in[-\pi,\pi]}|v(x)|.\label{def:L^p norm}
\end{align}

\section{\texorpdfstring{It\^{o} formula of the $L^1$ norm of $u(t,x)$}{Ito formula of L1 norm of u(t,x)}}\label{S:Ito of L1}
\begin{proposition}\label{prop:positivity}
Assume Assumption \ref{assumption1}-\ref{assumption2}, let $u(t,x)$ be the local mild solution to \eqref{eq:SPDE} then
    \begin{align}
        u(t,x)\geq 0 \text{ for all }t\geq 0\text{ and a.e. }x\in D.\label{eq:positivity}
    \end{align}
\end{proposition}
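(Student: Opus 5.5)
The plan is to reduce Proposition \ref{prop:positivity} to the comparison principle for SPDEs with globally Lipschitz coefficients, applied to each cutoff equation \eqref{eq:SPDE of cutoff}, and then pass to the local mild solution through the consistency relation \eqref{eq:consistency}. Fix $n\in\mathbb{N}$. By construction $b_n$ and $\sigma_n$ are globally Lipschitz. Moreover $\sigma_n(0)=\sigma(0)=0$ and $b_n(0)=b(0)=0$, taking $b$ and $\sigma$ to vanish on $(-\infty,0]$, or equivalently replacing $u$ by $u^+$ in the power functions, which does not affect Lipschitz continuity on $[-n,n]$. With these properties, the zero function $v\equiv 0$ is the unique mild solution of \eqref{eq:SPDE of cutoff} with initial datum $0$. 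The comparison theorem of \cite{Kotelenez1992,Mueller199101}, for two mild solutions driven by the same noise with the same Lipschitz coefficients and ordered initial data $0\le u(0,\cdot)$ (Assumption \ref{assumption2}), then gives $u_n(t,x)\ge 0$ for all $t\ge 0$ and a.e. $x$, almost surely. If one prefers not to cite comparison in this form, the standard alternative is to write the Itô formula for a smooth convex approximation $\phi_\delta$ of $(u^-)^2$ applied to the Galerkin or mollified approximations of $u_n$. One uses $\sigma_n(u)=0$ and $b_n(u)\ge0$ wherever $u\le 0$, and concludes $\mathbb{E}\int (u_n^-(t,x))^2dx=0$.

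Next, one transfers the result to $u$. By definition \eqref{def:local mild solution}, $u(t,x)=u_n(t,x)$ on $[0,\tau_n^\infty]$, so $u(t,x)\ge 0$ for all $t\in[0,\tau_n^\infty]$ and a.e. $x$. Taking the union over $n$, and using $\tau_\infty^\infty=\sup_n\tau_n^\infty$ from \eqref{def:tau_infty}, yields \eqref{eq:positivity} for every $t<\tau_\infty^\infty$, which is the full lifetime of the local solution. Only countably many null sets are discarded, one per $n$. For ``all $t$'' simultaneously, one uses the continuity of $u_n$ in $(t,x)$: a.e. nonnegativity for each rational $t$ then extends to all $t$, and in fact to all $x$ for $t>0$.

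The main obstacle is justifying the comparison principle for the cutoff coefficients exactly as defined. In particular, \eqref{eq:cutoff of b} uses $b(-n)=(-n)^\beta$, which is not real-valued for noninteger $\beta$, and the same issue arises for $\sigma$. I would handle this first by fixing the convention $b(u)=(u^+)^\beta$ and $\sigma(u)=\sigma(u^+)$. This convention leaves the equation unchanged on the positive solutions under consideration and makes both cutoffs globally Lipschitz and vanishing at $0$. Once this is in place, the cited comparison theorem applies verbatim, and the remaining steps are bookkeeping.
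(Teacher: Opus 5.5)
Your proposal is correct and follows the same route as the paper. The paper simply cites the comparison principle of \cite{Kotelenez1992,Mueller199101}, using $\sigma(0)=0$ and $u(0,\cdot)\ge 0$; your reduction to the cutoff equations \eqref{eq:SPDE of cutoff}, the transfer via \eqref{eq:consistency}, and the convention $b(u)=(u^+)^\beta$, $\sigma(u)=\sigma(u^+)$ just make explicit what the paper leaves implicit.

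One small caveat concerns your optional It\^{o}-formula alternative. Galerkin truncations do not inherit the pointwise properties $\sigma_n(u)=0$, $b_n(u)\ge 0$ on $\{u\le 0\}$, because the projected coefficients need not vanish or stay nonnegative there. That argument should therefore be run only on mollified-noise approximations.
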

This is the result of the comparison principle \cite{Kotelenez1992,Mueller199101} as mentioned before. Let $\tau_n^\infty$ be defined in \eqref{def:tau_n} and define
\begin{align}
    I_n(t):=\int u(t\land\tau_n^\infty,x)dx.\label{def: I_n}
\end{align}
Because the spatial integral of the periodic heat kernel $\int G(t,x-y)\equiv 1$ for all $t>0$ and $y\in [-\pi,\pi]$, $I_n(t)$ is a nonnegative semimartingale solving
\begin{align}
    \begin{split}
        I_n(t)=&I_n(0)+\int_0^{t\land\tau_n^\infty}\int b(u(s,x))dxds+\int_0^{t\land\tau_n^\infty}\int\sigma(u(s,x))W(dxds).
    \end{split} \label{eq:martingale rep of I_n}
\end{align}
Define $I(t)$ such that 
\begin{align}
    I(t):=I_n(t),\textrm{ for }t\in [0,\tau_n^\infty].\label{def:I_t}
\end{align}
It follows that $I(t)$ is a nonnegative local submartingale for $t<\tau_\infty^\infty$. Define the $L^1$ stopping times for $M>0$
\begin{align}
    \tau_M^1:=\inf\{t\in[0,\tau_\infty^\infty):|u(t)|_{L^1}>M\}\label{def:tau_M_1}
\end{align}
 We have the following bound for Lebesgue integral in the semimartingale representation of $I(t)$
\begin{lemma}\label{lem:Lebesgue int}
    For any $M>0$, the Lebesgue integral in the semimartingale representation of $|u(t)|_{L^1}$ satisfies
    \begin{align}
        \mathbb{E}\int_0^{\tau_M^1\land\tau_\infty^\infty}\int b(u(s,y))dyds\leq M.\label{ineq:Lebesgue int}
    \end{align}
\end{lemma}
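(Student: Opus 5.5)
Since the solution is nonnegative (Proposition~\ref{prop:positivity}), I would start from the semimartingale representation \eqref{eq:martingale rep of I_n}. The idea is to localize with both stopping times $\tau_M^1$ and $\tau_n^\infty$, take expectations so that the stochastic integral drops out, and then let $n\to\infty$ by monotone convergence.

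Fix $M>0$ and $n\in\mathbb{N}$, and set $\rho_n:=\tau_M^1\wedge\tau_n^\infty\wedge T$ for a fixed $T>0$. On $[0,\rho_n]$ we have $0\le u\le n$, so $\sigma(u)\le \sigma(n)$ is bounded. Hence the stochastic integral
\begin{align*}
\int_0^{t\wedge\rho_n}\int\sigma(u(s,x))W(dxds)
\end{align*}
is a true $L^2$-martingale with quadratic variation at most $2\pi\,\sigma(n)^2 T$, and its expectation is zero. Evaluating \eqref{eq:martingale rep of I_n} at the bounded stopping time $\rho_n$ and taking expectations gives
\begin{align*}
\mathbb{E}\int_0^{\rho_n}\int b(u(s,y))dyds=\mathbb{E}\,I(\rho_n)-I(0)\le \mathbb{E}\,I(\rho_n).
\end{align*}
Here I use $I(0)\ge 0$.

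The main step is to bound $I(\rho_n)\le M$. For $t<\tau_M^1$ we have $|u(t)|_{L^1}\le M$ by definition. At the time $\tau_M^1$ itself, continuity of $t\mapsto |u(t)|_{L^1}$ on $[0,\tau_n^\infty]$ is needed. This holds because $u_n$ has a version that is continuous in $(t,x)$, so $I_n$ is a continuous semimartingale. Hence $I(\tau_M^1)=M$ whenever $\tau_M^1\le\tau_n^\infty$ and $\tau_M^1$ is finite. There is one edge case: if $I(0)>M$, then $\tau_M^1=0$ and the integral is zero, so the claim holds trivially. In every case $I(\rho_n)\le M$, and therefore
\begin{align*}
\mathbb{E}\int_0^{\rho_n}\int b\,dyds\le M.
\end{align*}

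Finally, $b\ge0$ on $[0,\infty)$, and $\rho_n$ increases to $\tau_M^1\wedge\tau_\infty^\infty\wedge T$ as $n\to\infty$, because $\tau_n^\infty\uparrow\tau_\infty^\infty$ by \eqref{def:tau_infty}. Monotone convergence in $n$ and then in $T\to\infty$ yields \eqref{ineq:Lebesgue int}. The only delicate point is the path continuity of $I$ at $\tau_M^1$. I would justify it from the continuity of the mild solution $u_n$ of the cutoff equation \eqref{eq:integral equation of cutoff}, which has globally Lipschitz coefficients and continuous initial data, together with the consistency property \eqref{eq:consistency}.
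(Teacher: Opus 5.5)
Your proposal is correct and is essentially the paper's proof: stop at $t\wedge\tau_n^\infty\wedge\tau_M^1$, take expectations so that the stochastic integral drops out, discard the nonnegative term $I(0)$, bound the expected stopped $L^1$ norm by $M$, and remove the localization because the bound is uniform in $n$ and $t$. Your additional details fill in what the paper leaves implicit: the bound on $\sigma$ below level $n$ giving a true martingale, the trivial case $I(0)>M$, continuity of $I$ at $\tau_M^1$, and monotone convergence. On the continuity point, note that continuity of $I_n$ can be read off directly from its semimartingale representation, so continuous initial data is not actually needed.
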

\begin{proof}
According to \eqref{eq:martingale rep of I_n}, for any $M>0,n>0$, by martingale property
\begin{align}
    \mathbb{E}(I_n(t\land\tau_M^1))=I_n(0)+\mathbb{E}\int_0^{t\land\tau_n^\infty\land\tau_M^1}\int b(u(s,y))dyds.
\end{align}
Because each term of the right-hand side is nonnegative by Assumption \ref{assumption1} and \ref{assumption2}, it follows that
\begin{align}
     \mathbb{E}\int_0^{t\land\tau_n^\infty\land \tau_M^1}\int b(u(s,y))dyds\leq M.
\end{align}
This bound doesn't depend on $n$ or $t$. Thus, \eqref{ineq:Lebesgue int} holds.
\end{proof}

As in \cite{salins2025} we have the following bound for quadratic variation.
\begin{lemma}\label{lem:quad var}
    For any $M>0$, the quadratic variation of $|u(t)|_{L^1}$ satisfies
    \begin{align}
        \mathbb{E}\int_0^{\tau_M^1\land\tau_\infty^\infty}\int |\sigma(u(t,x))|^2dyds\leq M^2.\label{ineq:quad var}
    \end{align}
\end{lemma}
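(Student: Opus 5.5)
We will argue as in \cite{salins2025}, applying It\^{o}'s formula to the square of the stopped $L^1$ norm. Fix $M>0$ and $n$. We assume, as the statement implicitly requires, that $I(0)=|u_0|_{L^1}\leq M$; otherwise $\tau_M^1=0$ and the integral vanishes. By \eqref{eq:martingale rep of I_n}, the process $I_n(t\wedge\tau_M^1)$ is a nonnegative semimartingale. Its martingale part is
\[
N_n(t)=\int_0^{t\wedge\tau_n^\infty\wedge\tau_M^1}\int\sigma(u(s,x))W(dxds),
\]
with quadratic variation $\int_0^{t\wedge\tau_n^\infty\wedge\tau_M^1}\int\sigma(u(s,x))^2dxds$. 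This quadratic variation is bounded for fixed $t$, since $|u|\le n$ before $\tau_n^\infty$, so $N_n$ is a true martingale.

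By It\^{o}'s formula,
\[
I_n(t\wedge\tau_M^1)^2=I_n(0)^2+2\int_0^{t\wedge\tau_n^\infty\wedge\tau_M^1}I_n(s)\int b(u(s,x))dxds+2\int_0^{t\wedge\tau_n^\infty\wedge\tau_M^1}I_n(s)\,dN_n(s)+\int_0^{t\wedge\tau_n^\infty\wedge\tau_M^1}\int\sigma(u(s,x))^2dxds.
\]
The stochastic integral has integrand bounded by $M$ (for $s<\tau_M^1$) and $N_n$ is square integrable, so it has mean zero. Every other term on the right is nonnegative, because $b(u)=u^\beta\geq0$ and $I_n\geq0$ by Proposition \ref{prop:positivity}. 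Taking expectations and dropping the drift and initial terms gives
\[
\mathbb{E}\int_0^{t\wedge\tau_n^\infty\wedge\tau_M^1}\int\sigma(u(s,x))^2dxds\leq \mathbb{E}\bigl[I_n(t\wedge\tau_M^1)^2\bigr].
\]

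The main obstacle is bounding the right-hand side by $M^2$. The process $I$ has continuous paths: $u_n$ is continuous in $t$ with values in $C(\overline D)$, so $t\mapsto\int u_n(t,x)dx$ is continuous. Hence at the hitting time $\tau_M^1$ we have $I=M$ exactly, with no overshoot. Combined with $I(0)\le M$, this gives $I_n(t\wedge\tau_M^1)\leq M$ for all $t$, so the right-hand side is at most $M^2$.

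The resulting bound is uniform in $n$ and $t$. Since the integrand is nonnegative, we let $t\to\infty$ and then $n\to\infty$ and apply monotone convergence, using $\tau_n^\infty\uparrow\tau_\infty^\infty$, to obtain \eqref{ineq:quad var}.
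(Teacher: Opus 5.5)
Your proposal is correct and follows essentially the same route as the paper: apply It\^{o}'s formula to $I_n(t\wedge\tau_M^1)^2$, drop the nonnegative initial and drift terms, bound $\mathbb{E}\,I_n(t\wedge\tau_M^1)^2\le M^2$ using the definition of $\tau_M^1$, and pass to the limit since the bound is uniform in $n$ and $t$. You also fill in details the paper leaves implicit: the true-martingale/mean-zero property of the stochastic integral, path continuity of $I$ (so there is no overshoot at $\tau_M^1$), the trivial case $I(0)>M$, and the monotone convergence step.
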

\begin{proof}
   Applying It\^{o} formula to \eqref{eq:martingale rep of I_n}, for any $M>0,n>0$,
   \begin{align}
       &\mathbb{E}(I_n(t\land\tau_M^1))^2 \nonumber\\
       =&\mathbb{E}(I_n(0))^2+2\mathbb{E}\int_0^{t\land\tau_n^\infty\land\tau_M^1}\int b(u(s,y))I_n(s)dyds\nonumber\\
       &+\mathbb{E}\int_0^{t\land\tau_n^\infty\land \tau_M^1}\int |\sigma(u(s,y))|^2dyds.
   \end{align}
   Each term on the right-hand side is nonnegative by Assumption \ref{assumption1} and \ref{assumption2}, and $\mathbb{E}(I_n(t\land\tau_M^1))^2\leq M^2$ by the definition of $\tau_M^1$. Therefore,
   \begin{align}
       \mathbb{E}\int_0^{t\land\tau_n^\infty\land \tau_M^1}\int |\sigma(u(s,y))|^2dyds\leq M^2.
   \end{align}
   This bound doesn't depend on $n$ or $t$. Thus, \eqref{ineq:quad var} holds.
\end{proof}

For any $\epsilon>0$, let $V(x):=1-x^{-\epsilon}$ then by It\^{o} formula and $b(u)=u^{\beta}$ it follows that
\begin{align}
     V(I_n(t))=V(I_n(0))+\int_0^{t\land\tau_n^\infty}\int\left( \frac{\epsilon (u(s,x))^{\beta}}{(I_n(s))^{\epsilon+1}}-\frac{\epsilon(\epsilon+1)(\sigma(u(s,x)))^2}{2(I_n(s))^{\epsilon+2}}\right)dxds+N(t\land \tau_n^\infty), \label{eq:Ito of V}
\end{align}
where $N(t\land\tau_n^\infty)$ is given by
\begin{align}
    N(t\land\tau_n^\infty):=\int_0^{t\land\tau_n^\infty}\int\frac{\epsilon\sigma(u(s,x))}{(I_n(s))^{\epsilon+1}}W(dxds).
\end{align}
We next prove Theorem \ref{thm:main result} case by case.

\textbf{Remark}: In the proof of Theorem \ref{thm:main result} we focus on the case that $I_n(s)$ always stays away from $0$ by introducing suitable initial condition and stopping times. Thus, the It\^{o} formula can be directly applied and $N(t\land\tau_n^\infty)$ is a local martingale. The local martingale will be true martingale after introducing the suitable stopping times.

\section{\texorpdfstring{Proof of Theorem \ref{thm:main result}: Case $\beta>1, \gamma\in (0,\frac{\beta}{2}]$}{Proof of Theorem 1: Case gamma in (0,beta/2)}} \label{S:holder-case}
We will prove Theorem \ref{thm:main result} in the region A of Figure \ref{fig:2}
\begin{figure}[!htbp]
    \centering
    \includegraphics[width=\textwidth]{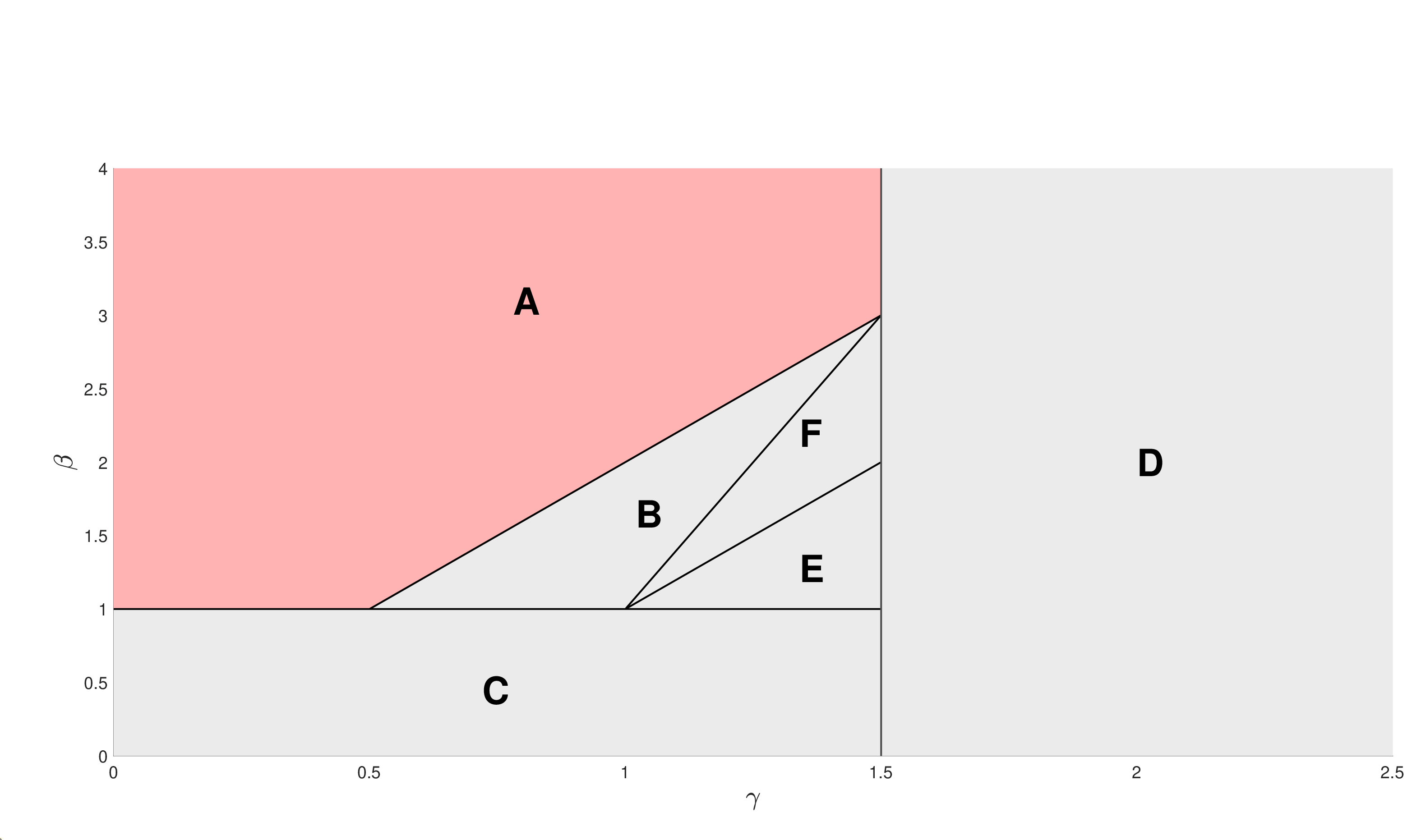}
    \caption{Explosion regions Section \ref{S:holder-case}}
    \label{fig:2}
\end{figure}

The case that $\beta>1,\gamma\in (0,\frac{\beta}{2}]$ where H\"older inequality is helpful is region A and part of region D of Figure \ref{fig:2}. What we need to prove positive probability for explosion is region A and the other part for this region where H\"older inequality is helpful is the results by Mueller \cite{Mueller2000} together with a comparison principle \cite{Kotelenez1992,Mueller199101}. Recall It\^{o} formula for $V(I_n(t)):=1-(I_n(t))^{-\epsilon}$ is given by
\begin{align}
     V(I_n(t))=V(I_n(0))+\int_0^{t\land\tau_n^\infty}\int \left(\frac{\epsilon (u(s,x))^{\beta}}{(I(s))^{\epsilon+1}}-\frac{\epsilon(\epsilon+1)(\sigma(u(t,x)))^2}{2(I(s))^{\epsilon+2}}\right)dxds+N(t\land \tau_n^\infty).\label{eq:ito of V case 1}
\end{align}
For simplicity we drop the subscription of $I_n(t)$ in the drift term of \eqref{eq:ito of V case 1}. It follows that 
by H\"older inequality \eqref{eq:holder}, because $2\gamma\leq\beta$ 
\begin{align}
    \int(\sigma(u(t,x)))^2dx\leq \int(u(s,x))^{2\gamma}dx\leq (2\pi)^{\frac{\beta-2\gamma}{\beta}}\Big(\int(u(s,x))^{\beta}dx\Big)^{\frac{2\gamma}{\beta}}.\label{ineq:holder of gamma}
\end{align}
Also, because $\beta>1$ it also follows by H\"older inequality
\begin{align}
   (2\pi)^{\beta-1} \int(u(s,x))^{\beta}dx\geq  (I(s))^{\beta}.\label{ineq:holder of beta}
\end{align}
Thus, using \eqref{ineq:holder of gamma} we have the following inequality for the drift component of \eqref{eq:ito of V case 1}
\begin{align}
   \begin{split}
       A:= &\int\left( \frac{\epsilon (u(s,x))^{\beta}}{(I(s))^{\epsilon+1}}-\frac{\epsilon(\epsilon+1)(\sigma(u(s,x)))^2}{2(I(s))^{\epsilon+2}}\right)dx\\
    \geq & \int \frac{\epsilon (u(s,x))^{\beta}}{(I(s))^{\epsilon+1}}dx-(2\pi)^{\frac{\beta-2\gamma}{\beta}}\frac{\epsilon(\epsilon+1)}{2(I(s))^{\epsilon+2}}\Big(\int(u(s,x))^{\beta}dx\Big)^{\frac{2\gamma}{\beta}}.
   \end{split}\label{ineq:A step 1}
\end{align}
Combining \eqref{ineq:A step 1} with \eqref{ineq:holder of beta} then we get
\begin{align}
    A\geq (2\pi)^{1-\beta}\epsilon\Big(1-\frac{(2\pi)^{\beta-2\gamma}(\epsilon+1)}{2(I(s))^{\beta+1-2\gamma}}\Big)(I(s))^{\beta-\epsilon-1}\label{ineq:A step 2}.
\end{align}
Because $\beta>1$, we can choose $\epsilon=\beta-1$. For any $\delta \in (0, (2\pi)^{1-\beta}\varepsilon)$ there exists $C_{\beta,\gamma,\delta}>0$ such that if $I(s)\geq C_{\beta,\gamma,\delta}$ then
\begin{align}
    A\geq \delta.\label{ineq: A}
\end{align}
Choose such a $\delta$ and assume $I(0)>C_{\beta,\gamma,\delta}$ then for $n\in\mathbb{N}$ such that $n>I(0)$, we define the stopping time for $L^1$ norm of local mild solution 
\begin{align}
    \tilde{\tau}_n^1:=\inf\{t\in [0,\tau_\infty^\infty):I(t)\leq C_{\beta,\gamma,\delta} \text{ or }I(t)\geq n\}.\label{def:tilde_tau_n}
\end{align}
Then for any $T>0$, by it follows that by \eqref{eq:ito of V case 1}, \eqref{ineq: A} and definition of $\tilde{\tau}_n^1$
\begin{align}
   \begin{split}
        V(I(\tilde{\tau}_n^1\land T))=& V(I(0))+\int_0^{\tilde{\tau}_n^1\land T\land\tau_\infty^\infty}\int \frac{\epsilon (u(s,x))^{\beta}}{(I(s))^{\epsilon+1}}-\frac{\epsilon(\epsilon+1)(\sigma(u(s,x)))^2}{2(I(s))^{\epsilon+2}}dxds\\
    &+N(\tilde{\tau}_n^1\land T\land \tau_\infty^\infty)\\
    \geq & V(I(0))+ \delta(\tilde{\tau}_n^1\land T\land\tau_\infty^\infty)+N(\tilde{\tau}_n^1\land T\land \tau_\infty^\infty).
   \end{split}\label{ineq: submartingale for L1}
\end{align}
Because $\delta(\tilde{\tau}_n^1\land T\land\tau_\infty^\infty)\geq 0$, we can take the expectation for both sides of \eqref{ineq: submartingale for L1} to get
\begin{align}
  \mathbb{E}V(I(\tilde{\tau}_n^1\land T))\geq V(I(0)).\label{ineq:supermart property}  
\end{align}
There are 4 possibilities for $I(\tilde{\tau}_n^1\land T)$. Thus,
\begin{align}
    \mathbb{E}V(I(\tilde{\tau}_n^1\land T))=& \mathbb{E}\Big( V(I(\tilde{\tau}_n^1\land T))\mathbbm{1}_{\{I(\tilde{\tau}_n^1\land T)=C_{\beta,\gamma,\delta}\}}\Big)+\mathbb{E}\Big( V(I(\tilde{\tau}_n^1\land T))\mathbbm{1}_{\{I(\tilde{\tau}_n^1\land T)=n\}}\Big)\nonumber\\
    &+ \mathbb{E}\Big( V(I(\tilde{\tau}_n^1\land T))\mathbbm{1}_{\{\tau_\infty^\infty<T\land\tilde{\tau}_n^1\}}\Big)+\mathbb{E}\Big( V(I(\tilde{\tau}_n^1\land T))\mathbbm{1}_{\{T<\tau_\infty^\infty\land\tilde{\tau}_n^1\}}\Big)
\end{align}
Because $V$ is positive and bounded by one, it follows that
\begin{align}
    \mathbb{E}V(I(\tilde{\tau}^1_n \wedge T)) \leq & V(C_{\beta,\gamma,\delta})+  \mathbb{P}(\tilde{\tau}^1_n\land\tau_\infty^\infty >T) \nonumber\\
    &+ \mathbb{P}\Big(\{I(\tilde{\tau}^1_n) = n,\tilde{\tau}_n^1<T\land\tau_\infty^\infty\} \text{ or } \tau^\infty_\infty <T\Big).\label{ineq:explosion prob step 1}
\end{align}
Thus,
\begin{align}
   &\mathbb{P}\Big(\{I(\tilde{\tau}^1_n) = n,\tilde{\tau}_n^1<T\land\tau_\infty^\infty\} \text{ or } \tau^\infty_\infty <T\Big)\nonumber\\
   \geq & V(I(0)) - V(C_{\beta,\gamma,\delta}) - \mathbb{P}(\tilde{\tau}^1_n\land\tau_\infty^\infty > T)\label{ineq:explosion prob step 2}
\end{align}
By \eqref{ineq: submartingale for L1}, positivity of $V$ and boundedness of $V$ it follows that
\begin{align}
    \delta\mathbb{E}(\tilde{\tau}_n^1\land T\land\tau_\infty^\infty)\leq 1.
\end{align}
Thus, by Chebyshev inequality
\begin{align}
    \mathbb{P}(\tilde{\tau}_n^1\land\tau_\infty^\infty>T)\leq \frac{\mathbb{E}(\tilde{\tau}_n^1\land T\land\tau_\infty^\infty)}{T}\leq \frac{1}{\delta T}.\label{ineq:chebyshev case 1}
\end{align}
Thus, it follows from \eqref{ineq:explosion prob step 2} and \eqref{ineq:chebyshev case 1} that for any $T>0$
\begin{align}
        &\mathbb{P}\Big(\{I(\tilde{\tau}^1_n) = n,\tilde{\tau}_n^1<T\land\tau_\infty^\infty\} \text{ or } \tau^\infty_\infty <T\Big)\nonumber\\
    \geq& V(I(0))-V(C_{\beta,\gamma,\delta})-\frac{1}{\delta T}.\label{ineq:explosion prob step 4}
\end{align}
Because $V$ is a strictly increasing function and $I(0)>C_{\beta,\gamma,\delta}$, there exists $T_0>0$ such that for any $T>T_0$ we have 
\begin{align}
    V(I(0))-V(C_{\beta,\gamma,\delta})-\frac{1}{\delta T}>0.\label{ineq:positivity of RHS}
\end{align}
Choose $T>T_0$ then by \eqref{ineq:explosion prob step 4} and \eqref{ineq:positivity of RHS}
\begin{align}
    \begin{split}
        &\mathbb{P}\Big(\{I(\tilde{\tau}^1_n) = n,\tilde{\tau}_n^1<T\land\tau_\infty^\infty\} \text{ or } \tau^\infty_\infty <T\Big)\\
    \geq &V(I(0))-V(C_{\beta,\gamma,\delta})-\frac{1}{\delta T}>0.
    \end{split}\label{ineq:explosion prob}
\end{align}
Because the RHS of \eqref{ineq:explosion prob} doesn't depend on $n$, taking $n$ goes to infinity it follows that either $L^\infty$ norm explodes before time $T$ or $L^1$ norm explodes before time $T$. By triangular inequality
\begin{align}
    I(t)=\int u(t,x) dx\leq C |u|_{L^\infty}.\label{ineq:L1 and Linfty}
\end{align}
This implies that if $L^1$ norm explodes then $L^\infty$ norm must explode. Thus, \eqref{ineq:explosion prob} and \eqref{ineq:L1 and Linfty} implies that
\begin{align}
    \mathbb{P}(\tau_\infty^\infty<\infty)>0.
\end{align}

\section{\texorpdfstring{Proof of Theorem \ref{thm:main result}: Case $\beta\in(1,3),\gamma\in (\frac{\beta}{2},\frac{\beta+3}{4})$}{Proof of Theorem 1: Case gamma in (beta/2,(beta+3)/4)}} \label{S:non-holder-case}
We will prove Theorem \ref{thm:main result} in the region B of Figure \ref{fig:3}.
\begin{figure}[!htbp]
    \centering
    \includegraphics[width=\textwidth]{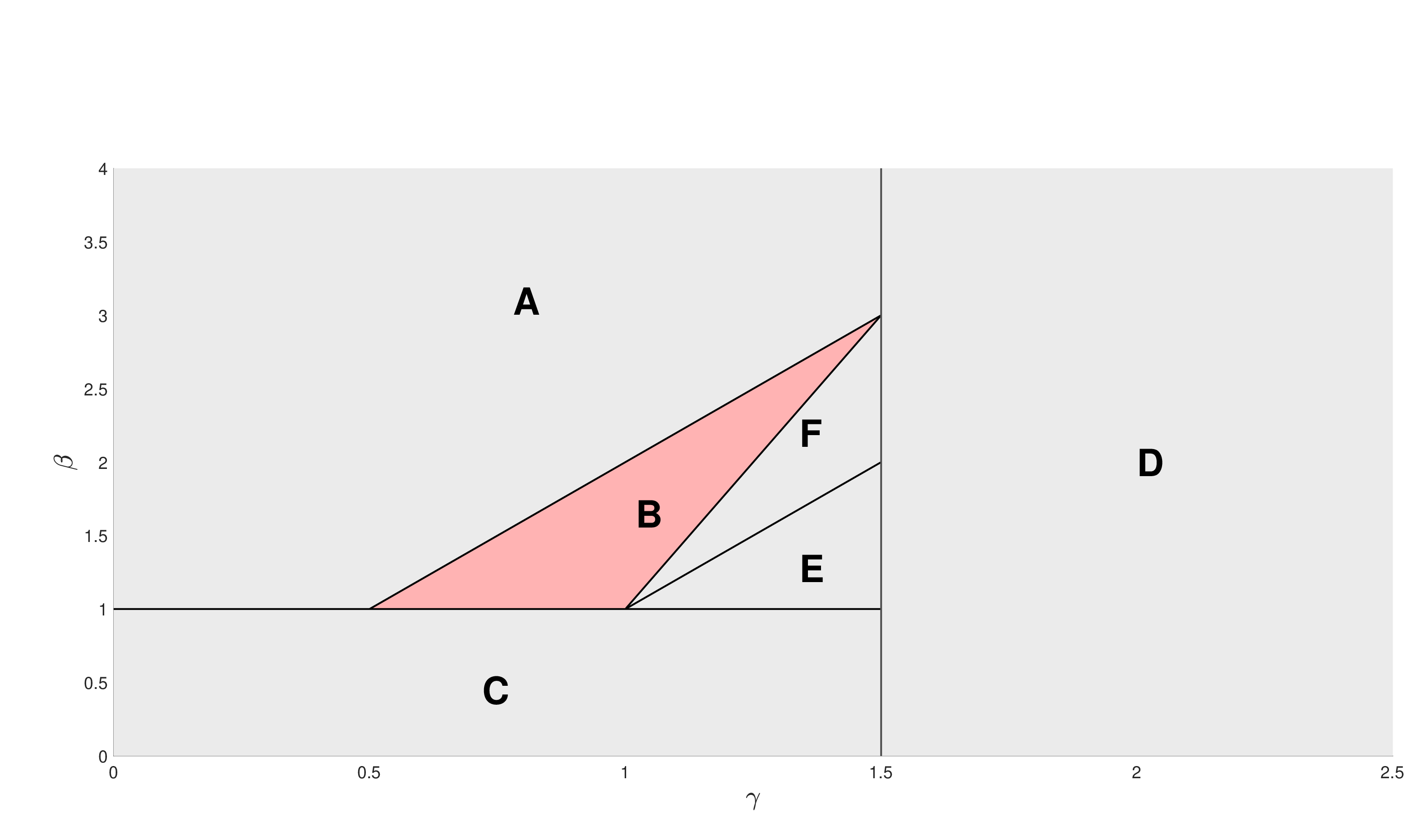}
    \caption{Explosion regions Section \ref{S:non-holder-case}}
    \label{fig:3}
\end{figure}

Recall the local mild solution $u(t,x)$ solves the following integral equation 
\begin{align}
	u(t,x) = &\int G(t,x-y) u(0,y)dy + \int_0^t \int G(t-s,x-y)(u(s,y))^\beta dyds \nonumber\\
	&+ \int_0^t \int G(t-s,x-y)\sigma(u(s,y)) W(dyds). \label{eq:integral equation case 2}
\end{align}
To prove for Theorem \ref{thm:main result}, we need the following estimates applied on the Lebesgue and stochastic convolution integrals in \eqref{eq:integral equation case 2}.
Let $\varphi$ be an adapted random field,
define
\begin{equation}
	Z^{\varphi}(t,x) := \int_0^t\int G(t-s,x-y)\varphi(s,y)W(dyds)\label{def:Z phi}
\end{equation}
and
\begin{equation}
	Y^{\varphi}(t,x) := \int_0^t\int G(t-s,x-y)\varphi(s,y)dyds.\label{def:Y phi}
\end{equation}

\begin{theorem}[Theorem 1.2 of \cite{salins2025}] \label{thm:stoch}
	For any $p>6$ there exists a constant $C_p>0$ such that for any $T \in [0,1]$ and any adapted $\varphi$,
	\begin{equation}
		\mathbb{E} \sup_{t \in [0,T]} \sup_{x \in D} |Z^{\varphi}(t,x)|^p \leq C_p T^{\frac{p}{4} - \frac{3}{2}} \mathbb{E}\int_0^T \int |\varphi(s,y)|^pdyds.
	\end{equation}
\end{theorem}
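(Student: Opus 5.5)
We need to prove the stochastic convolution bound of Theorem \ref{thm:stoch}: for $p>6$, $T\in[0,1]$,
\[
\mathbb{E}\sup_{t\le T}\sup_x|Z^\varphi(t,x)|^p\le C_pT^{\frac p4-\frac32}\,\mathbb{E}\int_0^T\!\!\int|\varphi|^p\,dy\,ds .
\]
We plan to use the factorization method of Da Prato--Kwapie\'n--Zabczyk. Fix $\alpha\in(0,\tfrac14)$, to be chosen later in terms of $p$. Using the semigroup identity $G(t-s,\cdot)=\frac{\sin\pi\alpha}{\pi}\int_s^t (t-r)^{\alpha-1}(r-s)^{-\alpha}G(t-r,\cdot)*G(r-s,\cdot)\,dr$, we write
\[
Z^\varphi(t,x)=\frac{\sin \pi\alpha}{\pi}\int_0^t\!\!\int (t-r)^{\alpha-1}G(t-r,x-z)\,Y_\alpha(r,z)\,dz\,dr,
\]
where
\[
Y_\alpha(r,z):=\int_0^r\!\!\int (r-s)^{-\alpha}G(r-s,z-y)\varphi(s,y)\,W(dyds).
\]

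\textbf{Step 1: the deterministic part.} Since $\int G(t-r,x-z)\,dz=1$ and $G\ge0$, H\"older's inequality in $(r,z)$ with exponents $p,\,q=p/(p-1)$ gives
\[
\sup_{t\le T}\sup_x|Z^\varphi(t,x)|\le C\,\sup_{t\le T}\Big(\int_0^t (t-r)^{(\alpha-1)q}\|G(t-r)\|_{L^q}^q\,dr\Big)^{1/q}\,|Y_\alpha|_{L^p([0,T]\times D)} .
\]
We have $\|G(\tau)\|_{L^q}\le C\tau^{-\frac12(1-\frac1q)}=C\tau^{-\frac1{2p}}$ for $\tau\le1$. The $r$-integral is therefore finite iff $(1-\alpha+\tfrac1{2p})q<1$, i.e.\ $\alpha>\tfrac{3}{2p}$. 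In that case it equals $C\,t^{(\alpha-\frac{3}{2p})}$ to the appropriate power. Raising to the $p$-th power yields a factor $T^{\alpha p-\frac32}$.

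\textbf{Step 2: the stochastic part.} For fixed $(r,z)$ we apply BDG and then H\"older in $(s,y)$ with exponents $\frac p2$ and $\frac{p}{p-2}$:
\[
\mathbb{E}|Y_\alpha(r,z)|^p\le C_p\,\mathbb{E}\Big(\int_0^r\!\!\int (r-s)^{-2\alpha}G^2(r-s,z-y)\varphi^2\,dy\,ds\Big)^{p/2}.
\]
Apply H\"older with the measure weight split so that $\varphi^p$ appears against $G^2(r-s,z-y)(r-s)^{-2\alpha}$ to a small power. Concretely, take weight $K(s,y)=(r-s)^{-2\alpha}G^2(r-s,z-y)$ and bound $(\int K\varphi^2)^{p/2}\le(\int K)^{\frac p2-1}\int K\varphi^p$. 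We have $\int_0^r\!\int K\le C\int_0^r (r-s)^{-2\alpha-\frac12}ds=Cr^{\frac12-2\alpha}$, which is finite iff $\alpha<\tfrac14$. Then we integrate over $(r,z)\in[0,T]\times D$ and use Fubini:
\[
\int_s^T\!\!\int (r-s)^{-2\alpha}G^2(r-s,z-y)\,dz\,dr\le CT^{\frac12-2\alpha}.
\]
This gives
\[
\mathbb{E}|Y_\alpha|^p_{L^p}\le C\,T^{(\frac12-2\alpha)\frac p2}\,\mathbb{E}\int_0^T\!\!\int|\varphi|^p .
\]

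\textbf{Step 3: combining.} The total power of $T$ is $\alpha p-\tfrac32+\tfrac p4-\alpha p=\tfrac p4-\tfrac32$, independent of $\alpha$, as desired. The constraints $\tfrac3{2p}<\alpha<\tfrac14$ are compatible exactly when $p>6$, which explains the hypothesis.

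The main obstacle is bookkeeping the $T$-exponents and keeping all bounds uniform in $T\le1$. In particular, the kernel estimate $\|G(\tau)\|_{L^q}\le C\tau^{-1/(2p)}$ and $\int G^2(\tau,y)\,dy\le C\tau^{-1/2}$ must hold for the periodic kernel on $\tau\in(0,1]$. This follows from comparison with the Gaussian kernel, since the periodic correction is bounded for $\tau\le1$.
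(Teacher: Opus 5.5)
Your factorization argument is correct: the window $\frac{3}{2p}<\alpha<\frac14$ is nonempty exactly when $p>6$, and the $T$-exponents $\alpha p-\frac32$ from Step 1 and $\frac p4-\alpha p$ from Step 2 combine to $\frac p4-\frac32$, as you say. The paper gives no proof of its own and imports the estimate from \cite{salins2025}; as far as I recall, that source proves it by essentially this same Da Prato--Kwapie\'n--Zabczyk factorization with BDG and H\"older, so your route appears to match the intended one.
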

\textbf{Remark}: The constant $C_p$ degrades as $p \downarrow 6$ in the sense that
	\[\lim_{p \downarrow 6} C_p = \infty.\]

We develop a similar estimate for $Y^{\varphi}(t,x)$.
\begin{theorem} \label{thm:Leb}
	For any $p>\frac{3}{2}$, there exists a constant $K_p>0$ such that for any $T\in [0,1]$ and any adapted $\varphi$,
	\begin{equation}
		\mathbb{E} \sup_{t \in [0,T]} \sup_{x \in D} |Y^{\varphi}(t,x)|^p \leq K_p T^{p-\frac{3}{2}} \mathbb{E}\int_0^T \int |\varphi(s,y)|^pdyds
	\end{equation}
\end{theorem}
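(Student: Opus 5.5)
This estimate is deterministic: $Y^\varphi$ involves no stochastic integral. So the plan is to prove a pathwise bound
\[
\sup_{t\le T}\sup_x |Y^\varphi(t,x)|^p \le K_p T^{p-\frac32}\int_0^T\int|\varphi|^p\,dy\,ds
\]
for every realization, and then take expectations. Adaptedness plays no role.

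The first step is Hölder's inequality in $(s,y)$ with conjugate exponents $p$ and $q=p/(p-1)$. For fixed $t\le T$ and $x$,
\[
|Y^\varphi(t,x)| \le \Big(\int_0^t\int G(t-s,x-y)^q\,dy\,ds\Big)^{1/q}\Big(\int_0^t\int|\varphi(s,y)|^p\,dy\,ds\Big)^{1/p}.
\]
Enlarging the time domain from $[0,t]$ to $[0,T]$ only increases the second factor. It therefore suffices to bound $\sup_{t\le T}\int_0^t\int G(r,z)^q\,dz\,dr$.

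The second step is the kernel estimate. I will use the standard periodic heat kernel bounds for $r\in(0,1]$:
\[
0\le G(r,z)\le C r^{-1/2}, \qquad \int G(r,z)\,dz=1.
\]
Both follow from writing $G$ as a sum of Gaussian images, or from the Fourier series \eqref{eq:heat kernel} for small $r$ together with boundedness for $r$ of order one. Interpolating gives
\[
\int G(r,z)^q\,dz \le \|G(r,\cdot)\|_\infty^{q-1}\int G(r,z)\,dz \le C r^{-(q-1)/2}.
\]
This is integrable near $r=0$ exactly when $(q-1)/2<1$, i.e. $q<3$, i.e. $p>3/2$. That explains the hypothesis. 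Then
\[
\int_0^T r^{-(q-1)/2}\,dr = C T^{(3-q)/2}.
\]
Raising this to the power $p/q = p-1$ gives the exponent
\[
\frac{(3-q)(p-1)}{2} = \frac{(3-q)p}{2q} = \frac{3(p-1)-p}{2} = p-\frac32,
\]
which is the stated power of $T$. Here $T\le1$ is used so that the small-time kernel bound applies on all of $[0,T]$.

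The last step is to combine the two bounds. This gives
\[
\sup_{t,x}|Y^\varphi|^p \le K_p T^{p-3/2}\int_0^T\int|\varphi|^p,
\]
with $K_p$ blowing up as $p\downarrow 3/2$. Taking expectations finishes the proof.

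The only genuine input is the sup bound $G(r,z)\le Cr^{-1/2}$ uniformly for $r\in(0,1]$ on the periodic domain. I would justify it via the method of images,
\[
G(r,z)=\sum_{k\in\mathbb Z}(4\pi r)^{-1/2}e^{-(z+2\pi k)^2/(4r)},
\]
where the $k\neq0$ terms are uniformly bounded for $r\le1$. This is the main, but mild, obstacle; everything else is Hölder's inequality.
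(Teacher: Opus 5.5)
Your proposal is correct and follows the paper's proof essentially line for line. Both use H\"older's inequality in $(s,y)$ with exponents $p/(p-1)$ and $p$, then $G\ge 0$, $\int G=1$ and $G(r,z)\le Cr^{-1/2}$ to get $\int G(r,z)^{q}\,dz\le Cr^{-1/(2(p-1))}$, which is integrable near $0$ exactly when $p>\frac32$ and yields the factor $T^{p-\frac32}$ in a pathwise bound that is then averaged. Your method-of-images argument for the kernel sup bound just supplies a justification for a fact the paper asserts without proof.
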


\begin{proof}
	The proof is a straightforward application of H\"older's inequality. For fixed $t \in [0,T]$ and $x \in D$,
	\begin{equation}
		|Y^{\varphi}(t,x)|^p \leq \left(\int_0^t\int G^{\frac{p}{p-1}}(t-s,x-y)dyds\right)^{p-1}\int_0^t \int |\varphi(s,y)|^pdyds.
	\end{equation}
	We know that $G(t,x)$ is positive, $\int G(t,x)dx =1$ and $G(t,x)\leq Ct^{-\frac{1}{2}}$ for any $t\in(0,T]$ and $x\in D$. Therefore an upper bound is
	\begin{equation}
		|Y^{\varphi}(t,x)|^p \leq C(\left(\int_0^t (t-s)^{-\frac{1}{2(p-1)}}ds\right)^{p-1}\int_0^t \int |\varphi(s,y)|^pdyds.
	\end{equation}
	Because $p>\frac{3}{2}$ this is integrable and
	\begin{equation}
		|Y^{\varphi}(t,x)|^p \leq K_p t^{p-\frac{3}{2}}\int_0^t \int |\varphi(s,y)|^pdyds.
	\end{equation}
	In fact, we know that $K_p = C\left(\frac{2p-2}{2p-3}\right)^{p-1}$.
	The right-hand side of the expression does not contain $x$ and is increasing in $t$. Therefore
	\begin{equation}
		\mathbb{E} \sup_{t \in [0,T]} \sup_{x \in D} |Y^{\varphi}(t,x)|^p \leq K_p T^{p-\frac{3}{2}} \mathbb{E} \int_0^T \int |\varphi(s,y)|^pdyds.
	\end{equation}
\end{proof}

We will apply these theorems with $p$ very close to $6$ and $\frac{3}{2}$ respectively. Also, for simplicity, we focus on positive and adapted random field $\varphi$ for the following Theorems.
\begin{theorem}\label{thm:Stochastic estimate case 2}
Let $\varphi$ be an adapted and positive random field. Recall from \eqref{def:Z phi} we define
\begin{align}
    Z^{\sigma(\varphi)}(t,x) := \int_0^t\int G(t-s,x-y)\sigma(\varphi(s,y))W(dyds).\label{def:Z phi case 2}
\end{align}
If
\begin{align}
    \theta> \frac{1}{3-2\gamma}, \label{eq:theta-lower-1}
\end{align}
then there exists a $p>6$ (but very close to $6$) such that
	\[-\alpha:= \theta((p-2)\gamma -p) + 2 <0.\]
Furthermore, if 
\begin{align}
    &\mathbb{E} \int_0^1 \int |\sigma(\varphi(s,y))|^2dyds \leq 3^{2(n+1)},\label{eq:quad-var-bound case 2}\\
    &\mathbb{P}\Big(\sup_{t\leq 1}\sup_{x\in D}\varphi(t,x)dx\leq 3^{\theta(n+1)}\Big)=1,\label{eq:upper bound of Linf phi}
\end{align}
	then there exists a universal constant $C_p>0$, independent of $n$ such that
	\begin{equation}
		\mathbb{P}\left(\sup_{t \in [0,1]} \sup_{x \in D} Z^{\sigma(\varphi)}(t,x)> 3^{\theta n}\right) \leq C_p 3^{-\alpha n}\label{eq:P for Zphi}
	\end{equation}
    Because the exponent is negative, these probabilities decrease geometrically.
\end{theorem}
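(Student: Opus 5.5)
First I would settle the existence of $p$. At $p=6$ the exponent is $\theta(4\gamma-6)+2=2-2\theta(3-2\gamma)$. This is negative exactly when $\theta>\frac{1}{3-2\gamma}$, which is \eqref{eq:theta-lower-1}. (Here $\gamma<\frac{\beta+3}{4}<\frac{3}{2}$, so $3-2\gamma>0$.) The map $p\mapsto \theta((p-2)\gamma-p)+2$ is affine, hence continuous. So we can pick $p>6$ close enough to $6$ that $-\alpha<0$ still holds. We fix this $p$ from now on, and $C_p$ is the constant of Theorem \ref{thm:stoch} at this $p$; it does not depend on $n$.

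Next comes the probability estimate. By Markov's inequality and Theorem \ref{thm:stoch} with $T=1$,
\begin{align*}
\mathbb{P}\Big(\sup_{t\le1}\sup_{x}Z^{\sigma(\varphi)}(t,x)>3^{\theta n}\Big)\le 3^{-p\theta n}\,\mathbb{E}\sup_{t\le 1}\sup_x|Z^{\sigma(\varphi)}(t,x)|^p\le C_p3^{-p\theta n}\,\mathbb{E}\int_0^1\int|\sigma(\varphi(s,y))|^pdyds .
\end{align*}
The main step is to bound the $p$-th moment by the quadratic moment \eqref{eq:quad-var-bound case 2}, using the almost sure $L^\infty$ bound \eqref{eq:upper bound of Linf phi}. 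I write $|\sigma(\varphi)|^p=|\sigma(\varphi)|^{p-2}|\sigma(\varphi)|^2$. Since $\varphi\ge0$ and $\sigma(u)\le u^\gamma$ for $u\ge 0$ (when $\gamma<1$, $u\wedge u^\gamma\le u^\gamma$), we get $|\sigma(\varphi)|^{p-2}\le 3^{\theta(n+1)\gamma(p-2)}$ almost surely. Therefore
\begin{align*}
\mathbb{E}\int_0^1\int|\sigma(\varphi)|^p\,dyds\le 3^{\theta\gamma(p-2)(n+1)}\,3^{2(n+1)} .
\end{align*}

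Combining the two displays, the bound is $C_p\,3^{\theta\gamma(p-2)+2}\cdot 3^{n(\theta((p-2)\gamma-p)+2)}=C_p'\,3^{-\alpha n}$. The factor $3^{\theta\gamma(p-2)+2}$ is independent of $n$, so it is absorbed into the constant. This gives \eqref{eq:P for Zphi}.

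I expect no real obstacle here; the only delicate points are bookkeeping. The "$n+1$" in the hypotheses versus "$n$" in the threshold only changes the constant. The degeneration of $C_p$ as $p\downarrow6$ is harmless, because $p$ is fixed once $\theta$ and $\gamma$ are fixed. The case $\gamma<1$ also needs the domination $\sigma(u)\le u^\gamma$, which holds because $\varphi\ge0$ (Proposition \ref{prop:positivity}-type positivity is assumed for $\varphi$ in the statement).
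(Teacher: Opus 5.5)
Your proposal is correct and follows the paper's proof: Chebyshev's inequality plus Theorem \ref{thm:stoch} at $T=1$, then the bound $|\sigma(\varphi)|^{p}\le \varphi^{(p-2)\gamma}|\sigma(\varphi)|^{2}$ with the almost sure $L^\infty$ bound, and finally the quadratic-moment hypothesis. Your write-up is slightly more careful than the paper's in two places: you check explicitly that the exponent at $p=6$ equals $2-2\theta(3-2\gamma)$ and use $\gamma<\tfrac32$, and you absorb the $(n+1)$-versus-$n$ factor $3^{\theta\gamma(p-2)+2}$ into the constant, where the paper silently drops it.
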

\begin{proof}
    We apply Chebyshev's inequality and Theorem \ref{thm:stoch} with a $p$ close enough to $6$ such that
	\[\theta((p-2)\gamma -p) + 2 <0.\]
    Thus, it follows that 
	\begin{align}
		&\mathbb{P}\left(\sup_{t \in [0,1]} \sup_{x \in D} Z^{\sigma(\varphi)}(t,x)> 3^{\theta n}\right) \nonumber\\
		&\leq 3^{-n\theta p} \mathbb{E} \sup_{t \in [0, 1]}\sup_{x \in D} |Z^{\sigma(\varphi)}(t,x)|^p  \nonumber\\
		&\leq C 3^{-n\theta p}  \mathbb{E} \int_0^{1} \int  |\sigma(\varphi(s,y))|^pdyds \nonumber\\
		&\leq C 3^{-n \theta p} 3^{(p-2)\theta \gamma n} \mathbb{E} \int_0^{1} \int |\sigma(u(s,y))|^2dyds.
 	\end{align}
 	In the previous line, we used the fact that $|\sigma(\varphi(s,y))|^p\leq(\varphi(s,y))^{(p-2)\gamma}|\sigma(u(s,y))|^2 $ and factored out $|\varphi|_{L^\infty}^{(p-2)\gamma} \leq 3^{ (p-2)\gamma (n+1) \theta}$ from the integral because of \eqref{eq:upper bound of Linf phi}. 
    Thus, we can use  estimate \eqref{eq:quad-var-bound case 2} to bound
 	\begin{align}
 		\mathbb{P}\left(\sup_{t \in [0,1]} \sup_{x \in D} Z^{\sigma(\varphi)}(t,x)> 3^{\theta n}\right) \leq C 3^{n(\theta((p-2)\gamma - p)+2)}.
 	\end{align}
 	Remember that the exponent is negative.
\end{proof}

Now we prove a similar result for the Lebesgue integral using similar arguments.
\begin{theorem}\label{thm:Lebesgue estimate case 2}
Let $\varphi$ be an adapted and positive random field. Recall from \eqref{def:Y phi} we define
\begin{align}
    Y^{\varphi^{\beta}}(t,x) := \int_0^t\int G(t-s,x-y)(\varphi(s,y))^{\beta}dyds.\label{def:Y phi case 2}
\end{align}
If
\begin{align}
    \theta>\frac{2}{3-\beta}, \label{eq:theta-lower-2}
\end{align}
then there exists a $p>\frac{3}{2}$ (but close to $\frac{3}{2}$) such that
	\[-\delta:=\theta \left((p-1)\beta-p\right) + 1<0.\]
Furthermore, if $\varphi$ satisfies \eqref{eq:upper bound of Linf phi} and
\begin{align}
    \mathbb{E}\int_0^1\int (\varphi(s,y))^{\beta}dyds\leq 3^{n+1},\label{eq:Lbeta bound}
\end{align}
then there exists a constant $K>0$, independent of $n$ such that
	\begin{equation}
		\mathbb{P}\left(\sup_{t \in [0,1]} \sup_{x \in D} |Y^{\varphi^\beta}(t,x)|>3^{\theta n}\right)
		\leq K 3^{-\delta n},\label{eq:P for Yphi}
	\end{equation}
The exponent is negative, so these probabilities decay geometrically in $n$.
\end{theorem}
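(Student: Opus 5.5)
We follow the proof of Theorem \ref{thm:Stochastic estimate case 2}, replacing Theorem \ref{thm:stoch} by Theorem \ref{thm:Leb}. The argument has three steps: pick the exponent $p$, apply Chebyshev, and trade the extra powers of $\varphi$ for the $L^\infty$ bound.

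\textbf{Choice of $p$.} Consider $f(p):=\theta((p-1)\beta-p)+1$. At $p=\frac{3}{2}$,
\begin{align*}
f\left(\tfrac{3}{2}\right)=\theta\left(\tfrac{\beta}{2}-\tfrac{3}{2}\right)+1=1-\theta\,\tfrac{3-\beta}{2}.
\end{align*}
This is strictly negative exactly when $\theta>\frac{2}{3-\beta}$, which is \eqref{eq:theta-lower-2}; here $\beta<3$ is used. Since $f$ is affine, hence continuous, in $p$, we can fix some $p>\frac{3}{2}$ close enough to $\frac{3}{2}$ that $f(p)<0$. Set $-\delta:=f(p)$. This $p$ depends only on $\theta$ and $\beta$, not on $n$.

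\textbf{Chebyshev and Theorem \ref{thm:Leb}.} Apply Chebyshev's inequality with the $p$-th moment, then Theorem \ref{thm:Leb} with $T=1$, to the random field $\varphi^\beta$:
\begin{align*}
\mathbb{P}\Big(\sup_{t\in[0,1]}\sup_{x\in D}|Y^{\varphi^\beta}(t,x)|>3^{\theta n}\Big)
\leq 3^{-\theta n p}\,\mathbb{E}\sup_{t\in[0,1]}\sup_{x\in D}|Y^{\varphi^\beta}(t,x)|^p
\leq K_p\,3^{-\theta n p}\,\mathbb{E}\int_0^1\int (\varphi(s,y))^{\beta p}dyds .
\end{align*}
The field $\varphi^\beta$ is adapted because $\varphi$ is adapted and positive.

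\textbf{Trading powers for the $L^\infty$ bound.} Write $(\varphi)^{\beta p}=(\varphi)^{\beta(p-1)}(\varphi)^{\beta}$. By \eqref{eq:upper bound of Linf phi}, almost surely $\varphi^{\beta(p-1)}\leq 3^{\theta\beta(p-1)(n+1)}$, so this factor comes out of the expectation. The remaining term $\mathbb{E}\int_0^1\int\varphi^\beta\,dyds$ is at most $3^{n+1}$ by \eqref{eq:Lbeta bound}. Combining,
\begin{align*}
\mathbb{P}(\cdots)\leq K_p\,3^{\theta\beta(p-1)+1}\,3^{n(\theta((p-1)\beta-p)+1)}=K\,3^{-\delta n},
\end{align*}
with $K:=K_p3^{\theta\beta(p-1)+1}$, which is independent of $n$.

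The only step needing care is the choice of $p$. It must exceed $\frac{3}{2}$ so that Theorem \ref{thm:Leb} applies with a finite $K_p$. It must also be close enough to $\frac{3}{2}$ to keep the exponent negative. Both requirements are met by the continuity argument in the first step, and once $p$ is fixed the rest is routine.
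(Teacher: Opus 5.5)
Your proposal is correct and follows the paper's proof step for step. You choose $p$ slightly above $\frac{3}{2}$ so the affine exponent stays negative, apply Chebyshev and Theorem~\ref{thm:Leb} with $T=1$, split $\varphi^{p\beta}=\varphi^{(p-1)\beta}\varphi^{\beta}$, and then use the $L^\infty$ and $L^\beta$ bounds; your explicit constant $K=K_p3^{\theta\beta(p-1)+1}$ just records what the paper absorbs into $K$ without comment.
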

\begin{proof}
	We apply Chebyshev's inequality and Theorem \ref{thm:Leb} with a $p$ close enough to $\frac{3}{2}$ such that
    \begin{align}
        \theta \left((p-1)\beta-p\right) + 1<0.
    \end{align}
    Thus, it follows that
	\begin{align}
		&\mathbb{P}\left(\sup_{t \in [0,1]} \sup_{x \in D} |Y^{\varphi^\beta}(t,x)|>3^{\theta n}\right) \nonumber\\
		&\leq 3^{-n\theta p} \mathbb{E} \sup_{t \in [0,1]} \sup_{x \in D} |Y^{\varphi^\beta}(t,x)|^p \nonumber\\
		&\leq K 3^{-n \theta p} \mathbb{E} \int_0^{1} \int (\varphi(s,y))^{p \beta}dyds\nonumber\\
        &\leq K 3^{-n \theta p}3^{(p-1)\theta\beta n}\mathbb{E} \int_0^{1} \int (\varphi(s,y))^{\beta}dyds.
	\end{align}
    In the previous line, we used the fact that $(\varphi(s,y))^{p \beta}\leq (\varphi(s,y))^{(p-1) \beta}(\varphi(s,y))^{\beta}$ and factored out $(\varphi(s,y))^{(p-1) \beta}\leq 3^{(p-1)\beta(n+1)\theta}$ from the integral because of \eqref{eq:upper bound of Linf phi}. Thus, we can use estimate \eqref{eq:Lbeta bound} to bound
    \begin{align}
        \mathbb{P}\left(\sup_{t \in [0,1]} \sup_{x \in D} |Y^{\varphi^\beta}(t,x)|>3^{\theta n}\right)
		\leq K 3^{n(\theta((p-1)\beta-p)+1)}.
    \end{align}
    Remember that the exponent is negative.
\end{proof}

The previous two results, combined with Lemmas \ref{lem:Lebesgue int} and  \ref{lem:quad var}, enable us to show that if $\|u(t)\|_{L^1} \approx 3^n$, then it is very unlikely for $\|u(t)\|_{L^\infty}$ to exceed $3^{n\theta}$ when $\theta$ satisfies certain conditions
\begin{lemma}\label{lem: prob of I triples}
    If $\theta$ satisfies \eqref{eq:theta-lower-1}, \eqref{eq:theta-lower-2} and
    \begin{align}
        1\leq \theta< \frac{1}{2\gamma-\beta}, \label{eq:theta-upper}
    \end{align}
    we can choose $\eta$ such that
   \begin{align}
    & \eta<1, \label{eq:eta-upper}\\
    & \eta>(2\gamma-\beta)\theta. \label{eq:eta-lower}
   \end{align}
   Assume the initial condition of the local mild solution $u(t,x)$ satisfies
	\begin{equation}
	I(0) = 3^n \text{ and } |u(0)|_{L^\infty} \leq 3^{\theta n},\label{eq:initial condition case 2}
\end{equation}
for $n\geq n_0$ where $n_0\in \mathbb{N}_+$ satisfies
\begin{align}
    \frac{\beta}{2}\cdot 3^{(2\gamma-\beta)\theta(n_0+1)-\eta n_0}<1,\label{ineq: assumption for n0}
\end{align}
and define a stopping time $\tau_n$ where
    \begin{align}
    \tau_n:=\inf\{t>0: I(t)> 3^{n+1} \text{ or }I(t)\leq 3^{\eta n}\text{ or } |u(t)|_{L^\infty}> 3^{(n+1)\theta}\},\label{def:tau_n case 2}
    \end{align}
then with $T_n=3^{-nx}$ for any $x\in (0,\beta-1)$ there exists $\psi>0$ and $C>0$ independent of $n$ such that
    \begin{align}
        \mathbb{P}\Big(I(\tau_n)=3^{n+1},\tau_n\leq T_n\Big)\geq 1 -C 3^{-\psi n}.
    \end{align}
\end{lemma}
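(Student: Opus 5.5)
The plan is to show that, up to $\tau_n\wedge T_n$, each of the three ``bad'' exits has probability at most $C3^{-\psi n}$. The three bad exits are: $I$ dropping to $3^{\eta n}$, $|u|_{L^\infty}$ exceeding $3^{(n+1)\theta}$, or $\tau_n>T_n$. The only remaining exit is $I(\tau_n)=3^{n+1}$ with $\tau_n\le T_n$. The first and third are handled by an It\^o/Khasminskii argument for $U(I):=I^{-\epsilon}$ as in Section~\ref{S:holder-case}. The second is handled by Theorems~\ref{thm:Stochastic estimate case 2} and \ref{thm:Lebesgue estimate case 2}.

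\textbf{Step 1 (drift of $U$ before $\tau_n$).} For $t<\tau_n$ we have $|u(t)|_{L^\infty}\le 3^{(n+1)\theta}$, and $2\gamma>\beta$. Hence
\[
\int\sigma(u)^2dx\le\int u^{2\gamma}dx\le 3^{(2\gamma-\beta)\theta(n+1)}\int u^\beta dx .
\]
Since also $I\ge 3^{\eta n}$, the It\^o correction in \eqref{eq:Ito of V} is bounded by $\frac{\epsilon+1}{2}3^{(2\gamma-\beta)\theta(n+1)-\eta n}\int u^\beta/I^{\epsilon+1}$. I would choose $\epsilon\in(0,\beta-1)$ small, so that $\frac{\epsilon+1}{2}<\frac{\beta}{2}$. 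Then \eqref{ineq: assumption for n0} makes this correction a fraction $\kappa<1$ of the main term. The bound is uniform in $n\ge n_0$, because the exponent $(2\gamma-\beta)\theta(n+1)-\eta n$ is decreasing in $n$ by \eqref{eq:eta-lower}. Combining with \eqref{ineq:holder of beta}, the drift of $-U(I)=V(I)-1$ is at least
\[
c\,\epsilon\, I^{\beta-1-\epsilon}\ge c\,3^{\eta n(\beta-1-\epsilon)}
\]
on $[0,\tau_n)$. So $U(I(t\wedge\tau_n))$ is a bounded supermartingale with a strongly negative drift.

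\textbf{Step 2 (low exit and slow exit).} Optional stopping for $U$ at $\tau_n\wedge T_n$ gives two bounds. First, $3^{-\epsilon\eta n}\,\mathbb{P}(I(\tau_n)\le 3^{\eta n},\tau_n\le T_n)\le 3^{-\epsilon n}$, so the low-exit probability is at most $3^{-\epsilon(1-\eta)n}$. Second,
\[
c\,3^{\eta n(\beta-1-\epsilon)}\,\mathbb{E}(\tau_n\wedge T_n)\le 3^{-\epsilon n}.
\]
Chebyshev then yields
\[
\mathbb{P}(\tau_n>T_n)\le 3^{n(x-\epsilon-\eta(\beta-1-\epsilon))}.
\]
Since $x<\beta-1$, I would take $\eta<1$ close enough to $1$ (this is allowed by \eqref{eq:eta-upper}--\eqref{eq:eta-lower}) so that $x<\epsilon+\eta(\beta-1-\epsilon)$. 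Both probabilities are then geometrically small.

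\textbf{Step 3 ($L^\infty$ exit).} Take $\varphi(t,x):=u(t\wedge\tau_n,x)$, suitably frozen after $\tau_n$ so that it is adapted and satisfies \eqref{eq:upper bound of Linf phi} almost surely; this holds by continuity and positivity. Lemma~\ref{lem:quad var} with $M=3^{n+1}$ gives \eqref{eq:quad-var-bound case 2}, and Lemma~\ref{lem:Lebesgue int} gives \eqref{eq:Lbeta bound}. Since $T_n\le 1$, the mild formula \eqref{eq:integral equation case 2} on $[0,\tau_n\wedge T_n]$ gives
\[
|u(t)|_{L^\infty}\le |u(0)|_{L^\infty}+\sup Y^{\varphi^\beta}+\sup Z^{\sigma(\varphi)} .
\]
Off the events of Theorems~\ref{thm:Stochastic estimate case 2} and \ref{thm:Lebesgue estimate case 2}, which have probability at most $C3^{-\alpha n}+K3^{-\delta n}$, this is at most $3\cdot 3^{\theta n}\le 3^{\theta(n+1)}$ because $\theta\ge1$. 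The $L^\infty$ exit therefore cannot occur there.

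\textbf{Conclusion and main obstacle.} A union bound then gives the claim with $\psi=\min\{\epsilon(1-\eta),\ \epsilon+\eta(\beta-1-\epsilon)-x,\ \alpha,\ \delta\}$. I expect the main obstacle to be Step 3. The difficulty is making the hypotheses of the estimate theorems hold almost surely via the stopped field, and dealing with the boundary case of equality $|u|_{L^\infty}=3^{(n+1)\theta}$ at $\tau_n$. If needed, I would fix the latter by running the tail estimates with thresholds $3^{\theta n}/2$ (which only changes constants) so that the exit is strict. A secondary delicate point is checking that one choice of $\epsilon,\eta$ satisfies Steps 1 and 2 simultaneously together with \eqref{ineq: assumption for n0}.
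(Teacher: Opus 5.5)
Your overall plan is the paper's. You use It\^o for $I^{-\epsilon}$ with $\sigma(u)^2\le u^\beta|u|_{L^\infty}^{2\gamma-\beta}$ and $I>3^{\eta n}$, Chebyshev for $\{\tau_n>T_n\}$, optional stopping for the low exit, and $u=S+Y+Z$ with Theorems \ref{thm:Stochastic estimate case 2} and \ref{thm:Lebesgue estimate case 2} for the $L^\infty$ exit.

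However, Step 3 fails as written. You take the \emph{stopped} field $\varphi(t,x)=u(t\wedge\tau_n,x)$ and assert that Lemmas \ref{lem:quad var} and \ref{lem:Lebesgue int} give \eqref{eq:quad-var-bound case 2} and \eqref{eq:Lbeta bound}. Those lemmas only bound time integrals up to $\tau^1_M\wedge\tau^\infty_\infty$. The hypotheses integrate $\varphi$ over all of $[0,1]$, and a frozen field keeps contributing after $\tau_n$:
\[
\int_0^1\int\varphi(s,y)^\beta\,dy\,ds=\int_0^{1\wedge\tau_n}\int u(s,y)^\beta\,dy\,ds+(1-\tau_n)^+\int u(\tau_n,y)^\beta\,dy .
\]
Consider the up-exit event $\{I(\tau_n)=3^{n+1},\ \tau_n\le T_n\}$, which is exactly the event you want to show has probability close to one. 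On it, \eqref{ineq:holder of beta} gives $\int u(\tau_n,y)^\beta dy\ge(2\pi)^{1-\beta}3^{\beta(n+1)}$. So the left side of \eqref{eq:Lbeta bound} is of order $3^{\beta n}\gg 3^{n+1}$. Similarly, when $\gamma>1$ the frozen tail in \eqref{eq:quad-var-bound case 2} is of order $3^{2\gamma n}\gg 3^{2n}$. The hypotheses are therefore false, not merely unproved.

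The repair is to kill rather than freeze, taking $\varphi=u\mathbbm{1}_{\{t\le\tau_n\}}$ as the paper does. Since $\sigma(0)=0$ and $0^\beta=0$, the integrals over $[0,1]$ collapse to $[0,1\wedge\tau_n]$. Because $\tau_n\le\tau^1_{3^{n+1}}$, the two lemmas then apply with $M=3^{n+1}$. Also, $Y^{\varphi^\beta}$ and $Z^{\sigma(\varphi)}$ still coincide with $Y$ and $Z$ for $t\le\tau_n$.

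There are two smaller points. First, choosing $\epsilon$ small is counterproductive. Your slow-exit exponent $\epsilon+\eta(\beta-1-\epsilon)$ is strictly below $\beta-1$, which forces you to tune $\eta$ to $x$. In the statement, however, $\eta$ is fixed before $x\in(0,\beta-1)$ is chosen. Take $\epsilon=\beta-1$, as in the paper. Then \eqref{ineq: assumption for n0} still gives $\kappa<1$, since it is calibrated exactly for $(\epsilon+1)/2=\beta/2$ and is already strict. The drift is then bounded below by a constant. This gives $\mathbb{P}(\tau_n>T_n)\le C3^{-(\beta-1-x)n}$ and a low-exit probability at most $3^{-(\beta-1)(1-\eta)n}$, for every admissible $\eta$ and every $x$.

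Second, your boundary-equality worry in Step 3 is moot. Since $\theta>\frac{2}{3-\beta}>1$, you get $3\cdot 3^{\theta n}<3^{\theta(n+1)}$ strictly.
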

Furthermore, we can choose $n_0$ in a way such that $1-C3^{-\psi n}\geq 1-C3^{-\psi n_0}>0$ for $n\geq n_0$.

\begin{proof}
    By \eqref{eq:Ito of V} it follows that for any $\epsilon>0$
\begin{align}
     \begin{split}
         V(I(t\land \tau_n))=&V(I(0))+\int_0^{t\land \tau_n}\int \frac{\epsilon (u(s,x))^{\beta}}{(I(s))^{\epsilon+1}}-\frac{\epsilon(\epsilon+1)(\sigma(u(s,x))^2}{2(I(s))^{\epsilon+2}}dxds\\
         &+N(t\land \tau_n).
     \end{split}\label{eq: Ito of I to the power -epsilon}
\end{align}
For simplicity we drop the subscript of $I_n(t)$ in the drift term of \eqref{eq: Ito of I to the power -epsilon}. By $(\sigma(u(s,x))^2\leq (u(s,x))^{2\gamma}\leq (u(s,x))^{\beta}|u(s)|_{L^\infty}^{2\gamma-\beta}$, we have 
\begin{align}
   \begin{split}
        B:=&\int \frac{\epsilon (u(s,x))^{\beta}}{(I(s))^{\epsilon+1}}-\frac{\epsilon(\epsilon+1)(\sigma(u(s,x)))^2}{2(I(s))^{\epsilon+2}}dx\\
    \geq&\int \frac{\epsilon (u(s,x))^{\beta}}{(I(s))^{\epsilon+1}}-\frac{\epsilon(\epsilon+1)(u(s,x))^{\beta}|u(s)|_{L^\infty}^{2\gamma-\beta}}{2(I(s))^{\epsilon+2}}dx.
   \end{split}\label{ineq:drift step 1}
\end{align}
By the definition of $\tau_n$, it follows that $|u(s)|_{L^\infty}^{2\gamma-\beta}\leq 3^{(2\gamma-\beta)(n+1)\theta}$ and \eqref{ineq:drift step 1} becomes
\begin{align}
    B\geq & \epsilon\Big(1-\frac{(\epsilon+1)}{2I(s)}3^{(2\gamma-\beta)(n+1)\theta}\Big)\int \frac{ (u(s,x))^{\beta}}{(I(s))^{\epsilon+1}}dx.\label{ineq:drift step 2}
\end{align}
Also, because $s\leq t\land \tau_n$ we have $I(t)\geq 3^{\eta n}$ by the definition of $\tau_n$. Thus,
\begin{align}
    B\geq &\epsilon\Big(1-\frac{(\epsilon+1)}{2}3^{(2\gamma-\beta)(n+1)\theta-\eta n}\Big)\int \frac{ (u(s,x))^{\beta}}{(I(s))^{\epsilon+1}}dx.\label{ineq: drift step 3}
\end{align}
It follows that by H\"older inequality
\begin{align}
     B\geq & C\Big(1-\frac{(\epsilon+1)}{2}3^{(2\gamma-\beta)(n+1)\theta-\eta n}\Big)(I(s))^{\beta-\epsilon-1}.\label{ineq: drift}
\end{align}
By \eqref{eq:eta-lower}, the coefficient of $n$ on the exponent is negative . Choose $\epsilon=\beta-1$ it follows that by \eqref{ineq: assumption for n0}
\begin{align}
    \int_0^{t\land \tau_n}\int \frac{\epsilon (u(s,x))^{\beta}}{(I(s))^{\epsilon+1}}-\frac{\epsilon(\epsilon+1)(\sigma(u(s,x)))^2}{2(I(s))^{\epsilon+2}}dxds\geq C(t\land\tau_n).
\end{align}
Thus, by \eqref{eq: Ito of I to the power -epsilon} we have
\begin{align}
    C\mathbb{E}(t\land \tau_n)\leq \mathbb{E}(I(0))^{-\epsilon}=3^{-\epsilon n}=3^{-(\beta-1)n}.
\end{align}
Thus, by Chebyshev inequality it follows that for any $t>0$
\begin{align}
    \mathbb{P}(\tau_n>t)\leq \frac{3^{-(\beta-1)n}}{Ct}.\label{ineq: chebyshev for tau}
\end{align}
Also, by submartingale property of \eqref{eq: Ito of I to the power -epsilon} we have
\begin{align}
    V(I(0))\leq \mathbb{E}\Big(V(I(t\wedge\tau))\Big).
\end{align}
Thus, by applying $V(x):=1-x^{-\epsilon}$ it follows that
\begin{align}
      3^{-n\epsilon}=(I(0))^{-\epsilon}&\geq \mathbb{E}(I(t\land \tau))^{-\epsilon}\geq 3^{-n\eta\epsilon}\mathbb{P}(I(\tau)=3^{\eta n},\tau_n<t)
\end{align}
Thus,
\begin{align}
    \mathbb{P}(\inf_{t\in [0,t\land\tau_n]}I(t)\leq 3^{\eta n})\leq 3^{-n(\beta-1)(1-\eta)}.\label{ineq: down probability case 2}
\end{align}
Furthermore, define
\begin{align}
    \begin{split}
         u(t,x)=&\int G(t,x-y)u(0,y)dy+\int_0^t\int G(t-s,x-y)(u(s,y))^{\beta}dyds\\
    &+\int_0^t\int G(t-s,x-y)\sigma(u(s,y))W(dyds)\\
    =:&S(t,x)+Y(t,x)+Z(t,x).
    \end{split}
\end{align}
Let $\varphi(t,x):=u(t,x)\mathbbm{1}_{\{t\leq \tau_n\}}$ and use Lemma \ref{lem:Lebesgue int} and \ref{lem:quad var} to see that $\varphi$ satisfies \eqref{eq:quad-var-bound case 2},\eqref{eq:upper bound of Linf phi} and \eqref{eq:Lbeta bound}. Thus, we can directly apply Theorem \ref{thm:Stochastic estimate case 2} and \ref{thm:Lebesgue estimate case 2}
\begin{align}
    \mathbb{P}(\sup_{t\in [0,1\land \tau_n]}\sup_{x\in D}|Z(t,x)|>3^{\theta n})\leq C_p 3^{-\alpha n}\nonumber\\
    \mathbb{P}(\sup_{t\in [0,1\land \tau_n]}\sup_{x\in D}|Y(t,x)|>3^{\theta n})\leq K3^{-\delta n}.\label{ineq:Y and Z}
\end{align}
Thus, we can replace $[0,1\land\tau_n]$ with $[0,T_n\land\tau_n]$ in the previous inequalities because $T_n=3^{-nx}<1$ where $x\in(0,\beta-1)$. Furthermore, if the event
\begin{align}
    \Big\{\sup_{t\in [0,T_n\land \tau_n]}\sup_{x\in D}|Z(t,x)|< 3^{\theta n}\text{ and }\sup_{t\in [0,T_n\land \tau_n]}\sup_{x\in D}|Y(t,x)|< 3^{\theta n}\Big\}\label{event:Y and Z}
\end{align}
happens, then 
\begin{align}
    \Big\{\sup_{t\in[0,T_n\land\tau_n]}\sup_{x\in D}|u(t,x)|<3^{\theta(n+1)}\Big\}\label{event:sup u}
\end{align}
Combining \eqref{ineq:Y and Z},\eqref{event:Y and Z} and \eqref{event:sup u}, it follows that
\begin{align}
    &\mathbb{P}(\sup_{t\in [0,T_n\land\tau_n]}|u(t)|_{L^\infty}>3^{\theta(n+1)})\nonumber\\
    \leq & \mathbb{P}(\sup_{t\in [0,1\land \tau_n]}\sup_{x\in D}|Z(t,x)|>3^{\theta n})+\mathbb{P}(\sup_{t\in [0,1\land \tau_n]}\sup_{x\in D}|Y(t,x)|>3^{\theta n})\nonumber\\
    \leq & C_p3^{-\alpha n}+K3^{-\delta n}.\label{ineq:up probability case 2} 
\end{align}
Choose $t=T_n$ then from the definition of $\tau_n$, there are four possibilities
\begin{align}
    I(\tau_n)=3^{\eta n}, I(\tau_n)=3^{n+1},|u(\tau_n)|_{L^\infty}=3^{\theta(n+1)}\text{ or }\tau_n>T_n.
\end{align}
Combing \eqref{ineq: chebyshev for tau},\eqref{ineq: down probability case 2} with $t=T_n$ and \eqref{ineq:up probability case 2}, it follows that
\begin{align}
    \begin{split}
        \mathbb{P}(I(\tau_n)\ne 3^{n+1})\leq &\mathbb{P}(\inf_{t\in [0,T_n\land\tau_n]}I(t)\leq 3^{\eta n})\\
        &+\mathbb{P}(\tau_n>T_n)+\mathbb{P}(\sup_{t\in [0,T_n\land\tau_n]}|u(t)|_{L^\infty}>3^{\theta(n+1)})\\
    \leq & C3^{-\phi n}
    \end{split}\label{ineq:Itau n doesn't triple}
\end{align}
for some $\phi>0$. Thus, by \eqref{ineq: chebyshev for tau} with $t=T_n=3^{-nx}$ and \eqref{ineq:Itau n doesn't triple}
\begin{align}
    \begin{split}
        \mathbb{P}(I(\tau_n)= 3^{n+1},\tau_n\leq T_n)&=\mathbb{P}(\tau_n\leq T_n)-\mathbb{P}( I(\tau_n)\ne 3^{n+1},\tau_n\leq T_n)\\
        &\geq 1-\mathbb{P}(\tau_n>T_n)-\mathbb{P}(I(\tau_n)\neq 3^{n+1})\\
        &\geq 1-C3^{-\psi n}
    \end{split}
\end{align}
for some $\psi>0$. Furthermore, combining all of the estimates it follows that we can choose $n_0$ in a way that $1-C3^{-\psi n}\geq 1-C3^{-\psi n_0}>0$ for any $n\geq n_0$.
\end{proof}

To prove Theorem \ref{thm:main result}: Case $\beta\in(1,3),\gamma\in (\frac{\beta}{2},\frac{\beta+3}{4})$ we need to define the following sequence of stopping times. Assume the initial condition of the local mild solution $u(t,x)$ satisfies
\begin{equation}
	I(0) = 3^{n_0} \text{ and } |u(0)|_{L^\infty} \leq 3^{\theta n_0},\label{eq:initial condition n0}
\end{equation}
for some $n_0\in \mathbb{N}_+$ satisfies \eqref{ineq: assumption for n0} and $1-C3^{-\psi n_0}>0$ where $\theta$ satisfies \eqref{eq:theta-lower-1},\eqref{eq:theta-lower-2} and \eqref{eq:theta-upper} and $\eta$ satisfies \eqref{eq:eta-upper} and \eqref{eq:eta-lower}. Define $\tilde{\tau}_{n_0-1}:=0$ and 
\begin{align}
    \tilde{\tau}_n:=
        \inf\{t>\tilde{\tau}_{n-1}: I(t)> 3^{n+1} \text{ or }I(t)\leq 3^{\eta n}\text{ or } |u(t)|_{L^\infty}> 3^{(n+1)\theta}\} \label{def:seq of tau n}
\end{align}
for $n\geq n_0$.
\begin{corollary}\label{cor: prob of I triples}
    Assume the initial condition of the local mild solution $u(t,x)$ satisfies \eqref{eq:initial condition n0}, $\theta$ satisfies \eqref{eq:theta-lower-1},\eqref{eq:theta-lower-2} and \eqref{eq:theta-upper} and $\eta$ satisfies \eqref{eq:eta-upper} and \eqref{eq:eta-lower}, define a sequence of stopping time $\{\tilde{\tau}_n\}_{n=n_0-1}^\infty$ with $\tilde{\tau}_{n_0-1}:=0$ and $\tilde{\tau}_n$ defined by \eqref{def:seq of tau n} then with $T_n=3^{-nx}$ for any $x\in (0,\beta-1)$ there exists $\psi>0$ and $C>0$ independent of $n$ such that
    \begin{align}
        \mathbb{P}\Big(I(\tilde{\tau}_n)=3^{n+1},\tilde{\tau}_n-\tilde{\tau}_{n-1}\leq T_n\Big| I(\tilde{\tau}_{n-1})=3^{n}\Big)\geq 1 -C 3^{-\psi n}.\label{ineq: conditional tripling}
    \end{align}
    It is the same as in Lemma \ref{lem: prob of I triples}, we can choose $n_0$ such that $1-C3^{-\psi n_0}>0$.
\end{corollary}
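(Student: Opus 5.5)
The plan is to reduce the corollary to Lemma \ref{lem: prob of I triples} using the strong Markov property of the local mild solution at the stopping time $\tilde{\tau}_{n-1}$. On the event $\{I(\tilde{\tau}_{n-1})=3^n\}$, the definition \eqref{def:seq of tau n} gives two facts at the restart time. First, $\tilde{\tau}_{n-1}$ is the exit time through the upper $L^1$ barrier, so $I$ is continuous there and $I(\tilde{\tau}_{n-1})=3^n$. Second, before $\tilde{\tau}_{n-1}$ the $L^\infty$ constraint was not violated, so $|u(\tilde{\tau}_{n-1})|_{L^\infty}\leq 3^{\theta n}$ (by continuity of $t\mapsto u(t)$ in $C(\overline D)$). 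For $n=n_0$ the same two properties hold by \eqref{eq:initial condition n0}. Hence the shifted process $v(t,x):=u(\tilde{\tau}_{n-1}+t,x)$ starts from data satisfying exactly \eqref{eq:initial condition case 2}.

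Next I would verify that $v$ is the local mild solution of \eqref{eq:SPDE} driven by the shifted noise $\tilde W(dyds)=W(dy,\tilde{\tau}_{n-1}+ds)$. This noise is again a space-time white noise with respect to the shifted filtration, and it is independent of $\mathcal F_{\tilde{\tau}_{n-1}}$. To show this I would apply the semigroup property of $G$ to the integral equation \eqref{eq:integral equation of cutoff} for each cutoff $u_m$, and then use the consistency relation \eqref{eq:consistency}.

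With this in place, $\tilde{\tau}_n-\tilde{\tau}_{n-1}$ is precisely the stopping time $\tau_n$ of \eqref{def:tau_n case 2} computed for $v$. Conditionally on $\mathcal F_{\tilde{\tau}_{n-1}}$ restricted to $\{I(\tilde{\tau}_{n-1})=3^n\}$, Lemma \ref{lem: prob of I triples} applies to $v$ with its initial datum frozen. It yields a lower bound $1-C3^{-\psi n}$ in which $C,\psi$ do not depend on the initial datum beyond \eqref{eq:initial condition case 2}. Integrating this conditional bound over the conditioning event gives \eqref{ineq: conditional tripling}. The constraint on $n_0$ is unchanged from Lemma \ref{lem: prob of I triples}, since the same $\theta,\eta$ and \eqref{ineq: assumption for n0} are used and $n\ge n_0$.

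I expect the main obstacle to be the rigorous strong Markov step, which requires the constants of Lemma \ref{lem: prob of I triples} to be uniform in the initial datum. This uniformity holds because every estimate in that proof depends only on $I(0)$ and $|u(0)|_{L^\infty}$. Specifically, these estimates are Lemmas \ref{lem:Lebesgue int} and \ref{lem:quad var}, Theorems \ref{thm:Stochastic estimate case 2} and \ref{thm:Lebesgue estimate case 2}, and the Itô computation \eqref{eq:Ito of V}. To be safe, instead of invoking an abstract strong Markov property I would rerun the proof of Lemma \ref{lem: prob of I triples} directly on the interval $[\tilde{\tau}_{n-1},\tilde{\tau}_n]$ with conditional expectations given $\mathcal F_{\tilde{\tau}_{n-1}}$. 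The Itô formula, the optional stopping for $V(I)$, and the moment bounds of Theorems \ref{thm:stoch} and \ref{thm:Leb} all remain valid for stochastic integrals started at a stopping time. In that version, the term $S(t,x)=\int G(t,x-y)u(\tilde{\tau}_{n-1},y)dy$ is bounded by $3^{\theta n}$ via the maximum principle.
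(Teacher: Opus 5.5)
Your proposal is correct and follows the paper's route. The paper gives only a one-line justification: the corollary is the conditional version of Lemma \ref{lem: prob of I triples}, obtained from the strong Markov property at $\tilde\tau_{n-1}$. You make explicit what that line leaves implicit: on $\{I(\tilde\tau_{n-1})=3^n\}$ the restarted state satisfies \eqref{eq:initial condition case 2}, $\tilde\tau_n-\tilde\tau_{n-1}$ is the lemma's $\tau_n$ for the shifted process, and the lemma's constants depend on the initial datum only through $I(0)$ and $|u(0)|_{L^\infty}$.
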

Corollary \ref{cor: prob of I triples} is a conditional version of Lemma \ref{lem: prob of I triples}. By strong Markov property, we prove explosion by showing
\begin{align}
    \mathbb{P}\Big(\bigcap_{n=n_0}^\infty \big\{ I(\tilde{\tau}_n)=3^{n+1},\tilde{\tau}_n-\tilde{\tau}_{n-1}\leq T_n\big\}\Big)>0.
\end{align}
\noindent\textit{Proof of Theorem \ref{thm:main result}: Case $\beta\in(1,3),\gamma\in (\frac{\beta}{2},\frac{\beta+3}{4})$}. 
Assume the initial condition of the local mild solution $u(t,x)$ satisfies \eqref{eq:initial condition n0} then we can directly apply Corollary \ref{cor: prob of I triples} by choosing the corresponding $\theta,\eta$ and a sequence of stopping time $\{\tilde{\tau}_n\}_{n=n_0-1}^\infty$.

Let $A_n:=\{I(\tilde{\tau}_n)=3^{n+1},\tilde{\tau}_n-\tilde{\tau}_{n-1}\leq T_n\}$ for $n\geq n_0$ and $A_{n_0-1}:=\{I(\tilde{\tau}_{n_0-1})=3^{n_0}\}$ then by strong Markov property and \eqref{ineq: conditional tripling} it follows that
\begin{align}
    \begin{split}
        \mathbb{P}(A_n|A_{n-1})&= \mathbb{P}\Big(I(\tilde{\tau}_n)=3^{n+1},\tilde{\tau}_n-\tilde{\tau}_{n-1}\leq T_n\Big| I(\tilde{\tau}_{n-1})=3^{n}\Big)\\
    &\geq 1 -C 3^{-\psi n}.
    \end{split}\label{eq:conditional prob case 2}
\end{align}
Remember we choose $n_0$ such that $1-C3^{-\psi n_0}>0$. Recall the fact that for a product of real numbers $a_n\in (0,1)$ we have
\begin{align}
    \prod_{n=1}^\infty a_n>0 \text{ if and only if }\sum_{n=1}^\infty (1-a_n)<\infty,\label{eq:real num fact}
\end{align}
it follows that by \eqref{eq:conditional prob case 2} and \eqref{eq:real num fact}
\begin{align}
    \prod_{n=n_0}^\infty \mathbb{P}(A_n|A_{n-1})>0,
\end{align}
Thus, by strong Markov property
\begin{align}
    &\mathbb{P}\Big(\bigcap_{n=n_0}^\infty \{I(\tilde{\tau}_n)=3^{n+1},\tilde{\tau}_n-\tilde{\tau}_{n-1}\leq T_n\}\Big)\nonumber\\
    =&\prod_{n=n_0}^\infty \mathbb{P}(A_n|A_{n-1})>0.
\end{align}
Because $\sum_{n=n_0}^\infty T_n<\infty$, the $L^1$ norm of the solution, $I(t)$, reaches $\infty$ in finite time with postive probability. This implies that
\begin{align}
    \mathbb{P}(\tau_{\infty}^\infty<\infty)>0.
\end{align}

\newpage
\bibliographystyle{abbrv} 
\bibliography{Bioliography}

@article{Franzova1999,
author = {N. Franzova},
title = {Long time existence for the heat equation with a spatially correlated noise term},
JOURNAL = {Stochastic Anal. Appl.},
FJOURNAL = {Stochastic Analysis and Applications},
volume = {17},
number = {2},
pages = {169--190},
year = {1999},
publisher = {Taylor \& Francis},
doi = {10.1080/07362999908809596},


URL = { 
    
        https://doi.org/10.1080/07362999908809596
    
    

},
eprint = { 
    
        https://doi.org/10.1080/07362999908809596
    
    

}

}

@article{Krylov1996,
author = {Krylov, N. V.},
title = {On {$L_p $}-Theory of Stochastic Partial Differential Equations in the Whole Space},
JOURNAL = {SIAM J. Math. Anal.},
FJOURNAL = {SIAM Journal on Mathematical Analysis},
volume = {27},
number = {2},
pages = {313-340},
year = {1996},
doi = {10.1137/S0036141094263317},

URL = { 
    
        https://doi.org/10.1137/S0036141094263317
    
    

},
eprint = { 
    
        https://doi.org/10.1137/S0036141094263317
    
    

}
}

@article {Bezdek2018,
    AUTHOR = {Pavel Bezdek},
     TITLE = {Existence and blow-up of solutions to the fractional
              stochastic heat equations},
   JOURNAL = {Stoch. Partial Differ. Equ. Anal. Comput.},
  FJOURNAL = {Stochastic Partial Differential Equations. Analysis and
              Computations},
    VOLUME = {6},
      YEAR = {2018},
    NUMBER = {1},
     PAGES = {73--108},
      ISSN = {2194-0401,2194-041X},
   MRCLASS = {35R60 (35A01 35B44 35R11 60H15)},
  MRNUMBER = {3768995},
MRREVIEWER = {Le\ Chen},
       DOI = {10.1007/s40072-017-0103-8},
       URL = {https://doi.org/10.1007/s40072-017-0103-8},
}

@article {Foondun2019,
    AUTHOR = {Mohammud Foondun and Wei Liu and Erkan Nane},
     TITLE = {Some non-existence results for a class of stochastic partial
              differential equations},
   JOURNAL = {J. Differ. Equ.},
  FJOURNAL = {Journal of Differential Equations},
    VOLUME = {266},
      YEAR = {2019},
    NUMBER = {5},
     PAGES = {2575--2596},
      ISSN = {0022-0396,1090-2732},
   MRCLASS = {35R60 (60H15)},
  MRNUMBER = {3906260},
       DOI = {10.1016/j.jde.2018.08.039},
       URL = {https://doi.org/10.1016/j.jde.2018.08.039},
}

@article {Debuss2002,
    AUTHOR = {de Bouard, A. and Debussche, A.},
     TITLE = {On the effect of a noise on the solutions of the focusing supercritical nonlinear {S}chrödinger equation},
   JOURNAL = {Probab. Theory Related Fields},
  FJOURNAL = {Probability Theory and Related Fields},
    VOLUME = {123},
      YEAR = {2002},
     PAGES = {76-96},
       DOI = {https://doi.org/10.1007/s004400100183},
}

@article{AGRESTI2023,
title = {Reaction-diffusion equations with transport noise and critical superlinear diffusion: Local well-posedness and positivity},
JOURNAL = {J. Differ. Equ.},
FJOURNAL = {Journal of Differential Equations},
volume = {368},
pages = {247-300},
year = {2023},
issn = {0022-0396},
doi = {https://doi.org/10.1016/j.jde.2023.05.038},
url = {https://www.sciencedirect.com/science/article/pii/S0022039623003765},
author = {Antonio Agresti and Mark Veraar}
}

@article{chen.huang:23:superlinear,
  author        = {Chen, Le and Huang, Jingyu},
  title         = {Superlinear stochastic heat equation on {$\Bbb{R}^d$}},
  journal       = {Proc. Amer. Math. Soc.},
  fjournal      = {Proceedings of the American Mathematical Society},
  volume        = {151},
  year          = {2023},
  number        = {9},
  pages         = {4063--4078},
  issn          = {0002-9939},
  mrclass       = {60H15 (35K57 35R60)},
  mrnumber      = {4607649},
  doi           = {10.1090/proc/16436},
  url           = {https://doi.org/10.1090/proc/16436}
}

@article{Dalang2019,
author = {Robert C. Dalang and Davar Khoshnevisan and Tusheng Zhang},
title = {Global solutions to stochastic reaction–diffusion equations with super-linear drift and multiplicative noise},
volume = {47},
JOURNAL = {Ann. Probab.},
FJOURNAL = {The Annals of Probability},
number = {1},
publisher = {Institute of Mathematical Statistics},
pages = {519 -- 559},
year = {2019},
doi = {10.1214/18-AOP1270},
URL = {https://doi.org/10.1214/18-AOP1270}
}

@article{Foondun2021,
author = {Mohammud Foondun and Eulalia Nualart},
title = {The {O}sgood condition for stochastic partial differential equations},
volume = {27},
journal = {Bernoulli},
number = {1},
publisher = {Bernoulli Society for Mathematical Statistics and Probability},
pages = {295 -- 311},
year = {2021},
doi = {10.3150/20-BEJ1240},
URL = {https://doi.org/10.3150/20-BEJ1240}
}

@article {Liang2022,
    AUTHOR = {Liang, Fei and Zhao, Shuangshuang},
     TITLE = {Global existence and finite time blow-up for a stochastic
              non-local reaction-diffusion equation},
   JOURNAL = {J. Geom. Phys.},
  FJOURNAL = {Journal of Geometry and Physics},
    VOLUME = {178},
      YEAR = {2022},
     PAGES = {Paper No. 104577, 21},
      ISSN = {0393-0440,1879-1662},
   MRCLASS = {60H15 (35B50 35R60)},
  MRNUMBER = {4433058},
       DOI = {10.1016/j.geomphys.2022.104577},
       URL = {https://doi.org/10.1016/j.geomphys.2022.104577},
}

@article{Mueller1991,
Author = {Carl Mueller},
Title = { Long time existence for the heat equation with a noise term.},
JOURNAL = {Probab. Theory Related Fields},
FJOURNAL = {Probability Theory and Related Fields},
Volume = {90},
Year = {1991},
Pages = {505–517},
doi = {https://doi.org/10.1007/BF01192141}
}

@article {Mueller199101,
    AUTHOR = {Mueller, Carl},
     TITLE = {On the support of solutions to the heat equation with noise},
   JOURNAL = {Stoch. Stoch. Rep.},
  FJOURNAL = {Stochastics and Stochastic Reports},
    VOLUME = {37},
      YEAR = {1991},
    NUMBER = {4},
     PAGES = {225--245},
      ISSN = {1045-1129},
   MRCLASS = {60H15 (35R60)},
  MRNUMBER = {1149348},
MRREVIEWER = {A.\ Badrikian},
       DOI = {10.1080/17442509108833738},
       URL = {https://doi.org/10.1080/17442509108833738},
}

@article{Mueller1993,
Author = {Carl Mueller and Richard Sowers},
Title = { Blowup for the heat equation with a noise term.},
JOURNAL = {Probab. Theory Related Fields},
FJOURNAL = {Probability Theory and Related Fields},
Volume = {97},
Year = {1993},
Pages = {287-320},
doi = {https://doi.org/10.1007/BF01195068}
}

@article{Mueller1997,
author = {Carl Mueller},
title = {Long time existence for the wave equation with a noise term},
volume = {25},
JOURNAL = {Ann. Probab.},
FJOURNAL = {The Annals of Probability},
number = {1},
publisher = {Institute of Mathematical Statistics},
pages = {133 -- 151},
year = {1997},
doi = {10.1214/aop/1024404282},
URL = {https://doi.org/10.1214/aop/1024404282}
}

@article{Mueller2000,
author = {Carl Mueller},
title = {The critical parameter for the heat equation with a noise term to blow up in finite time},
volume = {28},
JOURNAL = {Ann. Probab.},
FJOURNAL = {The Annals of Probability},
number = {4},
publisher = {Institute of Mathematical Statistics},
pages = {1735 -- 1746},
year = {2000},
doi = {10.1214/aop/1019160505},
}

@article{salins2025,
author = {Michael Salins},
title = {Solutions to the stochastic heat equation with polynomially growing multiplicative noise do not explode in the critical regime},
volume = {53},
JOURNAL = {Ann. Probab.},
FJOURNAL = {The Annals of Probability},
number = {1},
publisher = {Institute of Mathematical Statistics},
pages = {223 -- 238},
year = {2025},
doi = {10.1214/24-AOP1704},
URL = {https://doi.org/10.1214/24-AOP1704}
}

@article{salins2024,
      Title={Global solutions to the stochastic reaction-diffusion equation with superlinear accretive reaction term and superlinear multiplicative noise term on a bounded spatial domain}, 
      Author={Michael Salins},
      Journal = {Trans. Amer. Math. Soc.},
      FJOURNAL = {Transactions of the American Mathematical Society},
      Volume={375},
      Year={2022},
      Pages={8083-8099},
      doi={https://doi.org/10.1090/tran/8763}
}

@article{Kotelenez1992,
Author = {Kotelenez, Peter},
Title = { Comparison methods for a class of function valued stochastic partial differential equations.},
JOURNAL = {Probab. Theory Related Fields},
FJOURNAL = {Probability Theory and Related Fields},
Volume = {93},
Year = {1992},
Pages = {1–19},
doi = {https://doi.org/10.1007/BF01195385}
}

@article{Mueller1998,
Author = {Carl Mueller},
Title = { Long-time existence for signed solutions of the heat equation with a noise term.},
JOURNAL = {Probab. Theory Related Fields},
FJOURNAL = {Probability Theory and Related Fields},
Volume = {110},
Year = {1998},
Pages = {51-68},
doi = {https://doi.org/10.1007/s004400050144}
}

@book{DaPrato_Zabczyk_2014, 
place={Cambridge}, 
edition={2}, 
series={Encyclopedia of Mathematics and its Applications}, 
title={Stochastic Equations in Infinite Dimensions}, 
publisher={Cambridge University Press}, 
author={Da Prato, Giuseppe and Zabczyk, Jerzy}, 
year={2014}, 
collection={Encyclopedia of Mathematics and its Applications},

}

@article {Dalang1999,
    AUTHOR = {Dalang, Robert C.},
     TITLE = {Extending the martingale measure stochastic integral with
              applications to spatially homogeneous s.p.d.e.'s},
   JOURNAL = {Electron. J. Probab.},
  FJOURNAL = {Electronic Journal of Probability},
    VOLUME = {4},
      YEAR = {1999},
     PAGES = {no. 6, 29},
      ISSN = {1083-6489},
   MRCLASS = {60H05 (35R60 60G15 60G48 60H15)},
  MRNUMBER = {1684157},
MRREVIEWER = {Marta\ Sanz Sol\'e},
       DOI = {10.1214/EJP.v4-43},
       URL = {https://doi.org/10.1214/EJP.v4-43},
}

@article{Yuyang2025,
          AUTHOR = {Michael Salins and Yuyang Zhang},
     TITLE = {Nonexplosion for a large class of superlinear stochastic parabolic equations, in arbitrary spatial dimension},
   JOURNAL = {Stoch. Partial Differ. Equ. Anal. Comput.},
  FJOURNAL = {Stochastics and Partial Differential Equations: Analysis and Computations},
      YEAR = {2025},
       DOI = {10.1007/s40072-025-00400-0},
       URL = {https://doi.org/10.1007/s40072-025-00400-0},
}

@article {Bonder2009,
    AUTHOR = {Fern\'andez Bonder, Julian and Groisman, Pablo},
     TITLE = {Time-space white noise eliminates global solutions in
              reaction-diffusion equations},
   JOURNAL = {Phys. D},
  FJOURNAL = {Physica D: Nonlinear Phenomena},
    VOLUME = {238},
      YEAR = {2009},
    NUMBER = {2},
     PAGES = {209--215},
      ISSN = {0167-2789,1872-8022},
   MRCLASS = {35R60 (35K57 60H15)},
  MRNUMBER = {2516340},
       DOI = {10.1016/j.physd.2008.09.005},
       URL = {https://doi.org/10.1016/j.physd.2008.09.005},
}

@article {Osgood1898,
    AUTHOR = {Osgood, W. F.},
     TITLE = {Beweis der {E}xistenz einer {L}\"osung der
              {D}ifferentialgleichung {$\frac{{dy}}{{dx}} = f\left( {x,y}
              \right)$} ohne {H}inzunahme der {C}auchy-{L}ipschitz'schen
              {B}edingung},
   JOURNAL = {Monatsh. Math. Phys.},
  FJOURNAL = {Monatshefte f\"ur Mathematik und Physik},
    VOLUME = {9},
      YEAR = {1898},
    NUMBER = {1},
     PAGES = {331--345},
      ISSN = {1812-8076},
   MRCLASS = {99-04},
  MRNUMBER = {1546565},
       DOI = {10.1007/BF01707876},
       URL = {https://doi.org/10.1007/BF01707876},
}

@article {Foondun2024,
    AUTHOR = {Foondun, Mohammud and Khoshnevisan, Davar and Nualart,
              Eulalia},
     TITLE = {Instantaneous everywhere-blowup of parabolic {SPDE}s},
   JOURNAL = {Probab. Theory Related Fields},
  FJOURNAL = {Probability Theory and Related Fields},
    VOLUME = {190},
      YEAR = {2024},
    NUMBER = {1-2},
     PAGES = {601--624},
      ISSN = {0178-8051,1432-2064},
   MRCLASS = {60H15 (60F05 60H07)},
  MRNUMBER = {4797376},
MRREVIEWER = {Leila\ Setayeshgar},
       DOI = {10.1007/s00440-024-01263-7},
       URL = {https://doi.org/10.1007/s00440-024-01263-7},
}

@article {Annie2021,
    AUTHOR = {Millet, Annie and Sanz-Sol\'e, Marta},
     TITLE = {Global solutions to stochastic wave equations with superlinear
              coefficients},
   JOURNAL = {Stochastic Process. Appl.},
  FJOURNAL = {Stochastic Processes and their Applications},
    VOLUME = {139},
      YEAR = {2021},
     PAGES = {175--211},
      ISSN = {0304-4149,1879-209X},
   MRCLASS = {60H15 (35R60 60G17 60G60)},
  MRNUMBER = {4264843},
MRREVIEWER = {Omar\ Mellah},
       DOI = {10.1016/j.spa.2021.05.002},
       URL = {https://doi.org/10.1016/j.spa.2021.05.002},
}

@article {Mickey202409,
    AUTHOR = {Michael Salins},
     TITLE = {Global solutions to the stochastic heat equation with
              superlinear accretive reaction term and polynomially growing
              multiplicative white noise coefficient},
   JOURNAL = {Stoch. Partial Differ. Equ. Anal. Comput.},
  FJOURNAL = {Stochastics and Partial Differential Equations. Analysis and
              Computations},
    VOLUME = {14},
      YEAR = {2026},
    NUMBER = {1},
     PAGES = {109--132},
      ISSN = {2194-0401,2194-041X},
   MRCLASS = {60H15},
  MRNUMBER = {5015569},
       DOI = {10.1007/s40072-025-00351-6},
       URL = {https://doi.org/10.1007/s40072-025-00351-6},
}

@misc{joseph2026,
      title={Blowup for the multiplicative stochastic heat equation with superlinear drift}, 
      author={Mathew Joseph and Shubham Ovhal},
      year={2026},
      eprint={2511.23403},
      archivePrefix={arXiv},
      primaryClass={math.PR},
      url={https://arxiv.org/abs/2511.23403}, 
}

@book {Khasminskii,
    AUTHOR = {Khasminskii, Rafail},
     TITLE = {Stochastic stability of differential equations},
    SERIES = {Stochastic Modelling and Applied Probability},
    VOLUME = {66},
   EDITION = {second},
      NOTE = {With contributions by G. N. Milstein and M. B. Nevelson},
 PUBLISHER = {Springer, Heidelberg},
      YEAR = {2012},
     PAGES = {xviii+339},
      ISBN = {978-3-642-23279-4},
   MRCLASS = {60H10 (34F05 60J60 93E15 94A12)},
  MRNUMBER = {2894052},
MRREVIEWER = {Vivek\ S.\ Borkar},
       DOI = {10.1007/978-3-642-23280-0},
       URL = {https://doi.org/10.1007/978-3-642-23280-0},
}

@article {Kaplan,
    AUTHOR = {Kaplan, Stanley},
     TITLE = {On the growth of solutions of quasi-linear parabolic
              equations},
   JOURNAL = {Comm. Pure Appl. Math.},
  FJOURNAL = {Communications on Pure and Applied Mathematics},
    VOLUME = {16},
      YEAR = {1963},
     PAGES = {305--330},
      ISSN = {0010-3640,1097-0312},
   MRCLASS = {35.62},
MRREVIEWER = {P.\ Hartman},
       DOI = {10.1002/cpa.3160160307},
       URL = {https://doi.org/10.1002/cpa.3160160307},
}

@article {Bally1995,
    AUTHOR = {Bally, Vlad and Millet, Annie and Sanz-Sol\'e, Marta},
     TITLE = {Approximation and support theorem in {H}\"older norm for
              parabolic stochastic partial differential equations},
   JOURNAL = {Ann. Probab.},
  FJOURNAL = {The Annals of Probability},
    VOLUME = {23},
      YEAR = {1995},
    NUMBER = {1},
     PAGES = {178--222},
      ISSN = {0091-1798,2168-894X},
   MRCLASS = {60H15 (35K10 35R60)},
  MRNUMBER = {1330767},
MRREVIEWER = {Bj\"orn\ Schmalfuss},
       URL =
              {http://links.jstor.org/sici?sici=0091-1798(199501)23:1<178:AASTIH>2.0.CO;2-N&origin=MSN},
}
\end{document}